\documentclass[letterpaper,12pt]{amsart}
\setlength{\evensidemargin}{0.1in}
\setlength{\oddsidemargin}{0.1in}
\setlength{\textwidth}{6.3in}
\setlength{\topmargin}{0.0in}
\setlength{\textheight}{8.5in}
\setlength{\headheight}{0in}
\newtheorem{theorem}{Theorem}[section]

\newtheorem{lemma}[theorem]{Lemma}
\newtheorem{proposition}[theorem]{Proposition}
\theoremstyle{definition}
\newtheorem{definition}[theorem]{Definition}
\theoremstyle{remark}
\newtheorem{remark}[theorem]{Remark}

\def \sgn {\operatorname{sgn}}

\begin{document}
\author{Will Sawin}
\address{Department of Mathematics \\ Columbia University \\ 2990 Broadway \\ New York, NY 10027, USA}
\title[Dynamical models for Liouville]{Dynamical models for Liouville and obstructions to further progress on sign patterns}

\maketitle

\begin{abstract} We define a class of dynamical systems by modifying a construction due to Tao, which includes certain Furstenburg limits arising from the Liouville function. Most recent progress on the Chowla conjectures and sign patterns of the M\"{o}bius and Liouville functions uses methods that apply to any dynamical system in this class. Hence dynamical systems in this class with anomalous local behavior present obstructions to further progress on these problems by the same techniques. We construct straightforward examples of dynamical systems in this class based on polynomial phases and calculate the resulting obstruction. This requires explicit bounds for the number of sign patterns arising in a certain way from polynomials, which is elementary but not completely trivial. \end{abstract}

\section{Introduction}

Matom\"{a}ki and Radziwi{\l}{\l} proved that the Liouville function cancels in almost intervals of a given length, as long as the length increases to infinity \cite[Theorem 1]{mrcancellation}. Using this, Matom\"{a}ki, Radziwi{\l}{\l}, and Tao showed certain results about sign patterns of the Liouville function \cite{mrtsigns}. Here a sign pattern is an element of $\{-1,1\}^k$ and we say it occurs at a point $n$ if it equals $(\lambda(n),\dots, \lambda(n+k-1))$. They show that every three-term sign pattern occurs with positive density and give a lower bound for the number of $k$-term sign patterns that occur with positive density as a function of $k$. These two papers have a very different flavor. \cite{mrcancellation} uses the multiplicativity of Liouville at large primes, making it closer to traditional analytic number theory, and is more conceptual. \cite{mrtsigns} uses only multiplicativity at small primes, is more combinatorial, and uses ad-hoc arguments that involve checking different cases. Later work of Tao \cite{ttwochowla} and Tao and Ter\"{a}v\"{a}inen \cite{ttodd} improves on \cite{mrtsigns} by using information-theoretic methods to prove the logarithmically averaged Chowla's conjecture for all correlations of the Liouville function over two or an odd number of points, which among other applications gives further results on sign patterns.

To make further progress using the methods of \cite{mrcancellation}, \cite{ttwochowla}, and \cite{ttodd}, new ideas are needed, but \cite{mrtsigns} gives the impression that it's possible that even more complicated arguments, perhaps too complicated to be found by humans or computers, could prove new cases with longer sign patterns. It would be desirable to show that this is not entirely true, and that progress beyond a certain point on the sign pattern problem requires progress on the analytic side. The idea by Tao \cite{tfurstenburg} of studying the Furstenburg limits of Liouville gives a way to make this a well-defined mathematical problem, which we will solve in this paper. 

Given a sequence $\lambda(n)$ of signs, one can find a sequence $N_i$ of natural numbers such that for each $k$, and for each sign pattern of length $k$, the fraction of numbers from $1$ to $N_i$ where that sign pattern occurs converges as $i$ goes to $\infty$. Such a sequence determines a measurable dynamical system (in fact simply a measure on the space of infinite sequences of signs). The set of all such dynamical systems is called the set of Furstenburg limits of $\lambda$. For the Liouville function, it is possible to restrict to a subset of limits which preserve, in a certain sense, the multiplicativity property. This leads to the notion of limits of Liouville defined in an axiomatic way by Tao in \cite{tfurstenburg}. In \cite{tfurstenburg}, Tao notes that his work \cite{ttwochowla} can be deduced from \cite{mrcancellation} purely within the setting of limits of Liouville. In particular, it does not use the first axiom of \cite[Proposition 1]{tfurstenburg}, which states that the dynamical system actually arises as a limit of Liouville, except to apply the main theorem of \cite{mrcancellation}. The same is true for the combinatorial arguments of \cite{mrtsigns}. Hence we can test the boundaries of these methods by replacing that axiom by a weaker one, which encapsulates what has been proven on the analytic side (or a potential future analytic result). In this paper we will show concrete limits on what can be proven in these weaker axiom systems by explicit examples.

In fact we will use a slightly different set of axioms, which turns out to lead to simpler constructions of dynamical systems. As we will see below, dynamical systems satisfying all of Tao's axioms but the first can be constructed from dynamical models for $\lambda$ in our sense by an inverse limit construction.

\begin{definition} 
Let $\lambda: \mathbb N \to \{\pm 1\}$ be a multiplicative function. Define a \emph{dynamical model for $\lambda$} to be a measurable space $X$ with probability measure $\mu$, a measurable function $F:X \to \pm 1$ a measure-preserving transformation: $T: X \to X$, a measure preserving transformation $M: X \to \hat{\mathbb Z}$, and for each natural number $n$ a transformation $I_n: M^{-1} ( n \hat{\mathbb Z}) \to X$, such that that for any $x \in X$ outside a set of measure $0$:

\begin{enumerate}

\item For any $n\in \mathbb N$  with $M(x) \in n \hat{\mathbb Z}$, $n M (I_n(x)) = M(x)$.

\item $M (T (x) ) =M(x)+1$.

\item For any $n\in \mathbb N$ with $M(x) \in n \hat{\mathbb Z}$, $T(I_n(x))=I_n(T^n(x))$.

\item For any $n,m\in \mathbb N$ with $M(x)\in nm\hat{\mathbb Z}$, $I_m(I_n(x))=I_{nm}(x)$ (where the composition is well-defined by axiom (1)).

\item For any $n \in \mathbb N$ with $M(x) \in n\hat{\mathbb Z}$, $F(I_n(x))= \lambda(n) F(x)$. 

\end{enumerate}

and such that the pushforward of the measure $\mu$ along $I_n$ (after restriction to $M^{-1} (n\mathbb Z)$) is $\mu /n$. \end{definition}

\begin{remark} Here we think of $X$ as the natural numbers, $\mu$ as the uniform measure on some long interval, $F$ as $\lambda$, $T$ as increment, $M$ as the natural map $\mathbb N \to \hat{\mathbb Z}$, and $I_n$ as the division by $n$ map from multiples of $n$ to all numbers.

We can make this more precise by explaining how some dynamical models for $\lambda$ arise from Furstenburg limits of $\lambda$. We can take $X =( \pm 1)^{\mathbb N_0 } \times \hat{\mathbb Z}$. For each point $n \in \mathbb N$ we define a point $(h \mapsto \lambda(n+h), h)$ of $X$. By logarithmically averaging the point measures associated to numbers $n$ in the interval $[1,\dots,N]$, we can define a measure on $X$. Any sequence of $N$s admits a subsequence on which these measures converge to some limit, which we will take to be $\mu$. One takes $F(f,m)=f(0)$, $T((f,m)) =(h\mapsto f(h+1), m+1)$, $I_n ( f,m))= ( h\mapsto \lambda(n) f(nh), m/n)$ and verifies that identities (1-5) hold and that the measure associated to $N$ is approximately preserved by $T$, $M$, and $I_n$, with the last after multiplication by $1/2$, so the limit is exactly preserved by this transformation. Removing the $\hat{\mathbb Z}$ factor and projecting the measure down produces a Furstenburg system in the sense of \cite[Definition 2.4]{fhsystem}. In this way, positive results about dynamical models can be used to prove results about $\lambda$ itself, while dynamical models themselves give obstructions to this method. \end{remark}

\begin{definition} The sign pattern $\epsilon_1,\dots,\epsilon_k\in \{-1,1\}^k$ \emph{appears in $(X, \mu, F, T,M,I_n)$} if the measure of the set of $x \in X$ with $F( T^i (x))=\epsilon_i$ for $i$ from $1$ to $k$ is positive. \end{definition}

The definition of dynamical models of $\lambda$ we have given does not yet have an axiom encapsulating the analytic work. The result of Matom\"{a}ki and Radziwi{\l}{\l} implies that any Furstenburg limit of $\lambda$ has vanishing Gowers-Host-Kra $U^1$ norm and thus does not correlate with any $0$-step nilsystem. A plausible avenue for future progress on the analytic side is showing a vanishing $U^{d}$ norm, or equivalently that $\lambda$ does not correlate with any $d-1$-step nilsystem. For this reason, we focus our attention on the following definition.

\begin{definition} $(X, \mu, F, T, M,I_n)$ is \emph{$d$-Fourier uniform} if the Gowers-Host-Kra $U^d$ seminorm of $F$ on $(X, \mu, F, T)$ vanishes, i.e. \[ \lim_{H \to \infty} \mathbb E_{1 \leq h_1,\dots, h_d \leq H} \mathbb E_{ x \in X} \prod_{ \delta_1,\dots, \delta_d \in \{0,1\}}  F( T^{ \sum_{i=1}^d \delta_i h_i} x)  =0 \] and the same holds with any congruence conditions on $h_1,\dots,h_d, M(x)$. \end{definition} 

Notably, it should be possible to check, by an argument along the lines of \cite[Section 3]{tchowlasarnak}, that for $\lambda$ the Liouville function, if $(X, \mu, F, T,M,I_n)$ is $d$-Fourier uniform then it satisfies the Chowla conjecture for $d+1$-point correlations - see \cite[Remark 3.4]{tchowlasarnak}. By an argument along the lines of \cite{ttodd}, it should be possible to show the odd order Chowla conjectures in this setting without any uniformity assumption.

We will construct in this paper explicit dynamical models for $\lambda$ which have interesting properties with respect to sign patterns but have vanishing $U^d$ norm, for the simple reason that they are sufficiently general $d$-step nilsystems. 

\begin{theorem} For each multiplicative function $\lambda: \mathbb N \to \pm 1$ and natural number $d$, there exists a $d$-Fourier uniform dynamical model for $\lambda$, $(X,\mu, T, M,I_n)$, such that the number of distinct sign patterns of length $k$ that appear in $(X,\mu, T, M,I_n)$  is $O(k^{(d+1)(d+2)/2})$. \end{theorem}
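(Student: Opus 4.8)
The plan is to realize the model as a $d$-step skew-shift nilsystem carrying a multiplicative sign twist, and then to reduce the sign-pattern count to a combinatorial statement about polynomial phases. Take $X=(\mathbb R/\mathbb Z)^{d+1}\times\{\pm1\}\times\hat{\mathbb Z}$, with a point written $(a,\sigma,z)$, where $a=(a_0,\dots,a_d)$ is regarded as the coefficient vector of the phase polynomial $P_a(h)=\sum_{j=0}^d a_j h^j$. Define $F(a,\sigma,z)=\sigma\,g(a_0)$ for a fixed mean-zero $g:\mathbb R/\mathbb Z\to\{\pm1\}$ (say $g=+1$ on $[0,1/2)$ and $g=-1$ on $[1/2,1)$), set $M(a,\sigma,z)=z$, let $T$ be the unipotent affine map induced by $P_a(h)\mapsto P_a(h+1)$ on $a$ together with $\sigma\mapsto\sigma$ and $z\mapsto z+1$, and let $I_n$ act on $M^{-1}(n\hat{\mathbb Z})$ by the dilation $P_a(h)\mapsto P_a(nh)$ (that is, $a_j\mapsto n^j a_j$), together with $\sigma\mapsto\lambda(n)\sigma$ and $z\mapsto z/n$. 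Equip $X$ with the product of Haar measure on $(\mathbb R/\mathbb Z)^{d+1}$, the uniform measure on $\{\pm1\}$, and Haar measure on $\hat{\mathbb Z}$. Axioms (1)--(3) are immediate from these formulas, (4) and (5) reduce to the semigroup law $I_m I_n=I_{nm}$ for dilations together with the multiplicativity of $\lambda$ (noting that $a_0$ is fixed by every $I_n$, so that $F(I_n x)=\lambda(n)F(x)$ holds exactly), and the pushforward condition holds because multiplication by the integer $n^j$ preserves Haar measure on $\mathbb R/\mathbb Z$ while the $\hat{\mathbb Z}$-coordinate supplies the factor $1/n$.

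For $d$-Fourier uniformity, observe that under $T$ the coordinate $a_d$ is fixed and $a_0$ is the fully driven top coordinate, so $(X,\mu,F,T)$ is, up to the independent $\sigma$- and $z$-coordinates, the standard $d$-step skew shift, and $F$ is a mean-zero function of its top coordinate $a_0$. Such a function is orthogonal to every function of the lower coordinates $a_1,\dots,a_d$, hence to the $(d-1)$-step factor, so its $U^d$ seminorm vanishes; equivalently, expanding $g$ in a Fourier series and integrating over $a$ leaves only terms that carry the factor $\hat g(0)=0$ once one averages over the shifts $h_1,\dots,h_d$. The versions with congruence conditions on $h_1,\dots,h_d$ and on $M(x)$ reduce to the same assertion on a coset of $\hat{\mathbb Z}$ and on a finite-index sub-skew-shift, on which $F$ remains a mean-zero function of the top coordinate.

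It remains to count sign patterns. Since the $\sigma$- and $z$-coordinates are independent of $a$ and $F(T^i(a,\sigma,z))=\sigma\,g(P_a(i))$, a pattern $(\epsilon_1,\dots,\epsilon_k)$ appears precisely when $\pm(g(P_a(1)),\dots,g(P_a(k)))$ is realized on a set of coefficient vectors $a$ of positive measure, i.e. when it is the sign vector of some open cell. Up to the harmless factor of $2$ coming from $\sigma$, the number of appearing patterns therefore equals the number of distinct vectors $(g(P_a(1)),\dots,g(P_a(k)))$ as $a$ ranges over real polynomials of degree at most $d$, and the theorem reduces to the claim that this number is $O(k^{(d+1)(d+2)/2})$.

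This last bound is the main obstacle. A $\pm1$ vector is determined by its first entry together with the set of indices $i$ at which $g(P(i))\neq g(P(i+1))$, i.e. at which $\lfloor 2P(i+1)\rfloor-\lfloor 2P(i)\rfloor$ is odd, so it suffices to bound the number of realizable crossing sets. I would argue by induction on $d$. The base cases $d=0$ (a constant phase, giving $O(1)$ patterns) and $d=1$ are handled directly; the linear case is the coding of a circle rotation by two equal arcs and already exhibits the cubic growth $\Theta(k^3)$ familiar from counting factors of Sturmian words over all slopes, which pins down the exponent $(1+1)(1+2)/2=3$. For the inductive step I would pass from $P$ to its forward difference, a polynomial of degree $d-1$, and show that the sign vector of $P$ is determined by that of its difference together with $O(k^{d+1})$ additional data recording the parities and approximate positions of the half-integer crossings contributed by the top-degree part; telescoping the resulting recursion $N_d(k)\le O(k^{d+1})\,N_{d-1}(k)$ from the base $N_1(k)=O(k^3)$ gives the exponent $3+\sum_{j=2}^{d}(j+1)=(d+1)(d+2)/2$. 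The delicate point, and the reason the estimate is elementary but not entirely trivial, is to extract exactly the increment $k^{d+1}$ per degree: a crude hyperplane-arrangement bound fails here because it distinguishes parameter cells rather than patterns and produces a far larger exponent, so the argument must instead control the distinct crossing sets directly.
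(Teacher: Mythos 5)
Your dynamical model is, up to a change of coordinates, the one the paper uses: the paper takes $X=(\mathbb R^{d+1}/H)\times\hat{\mathbb Z}$ with $H$ the lattice of even-integer-valued polynomials, $F(f,n)=(-1)^{\lfloor f(0)\rfloor}$, $T\colon f(x)\mapsto f(x+1)$, and $I_a\colon f(x)\mapsto f(ax)+\tfrac{1+\lambda(a)}{2}$, so your $\{\pm1\}$-factor $\sigma$ is just the paper's integer shift carrying $\lambda(a)$ in another guise, and your verification of the axioms and the pushforward condition is fine. For $d$-Fourier uniformity, your appeal to orthogonality to the $(d-1)$-step factor invokes Host--Kra structure theory for a system that is not ergodic (the top coefficient $a_d$ is fixed by $T$); the paper instead just expands $(-1)^{\lfloor f(n)\rfloor}$ in characters with nonzero half-integer frequencies and checks by a linear-independence argument that no nontrivial frequency combination survives the average over $h_1,\dots,h_d$, which also handles the congruence-condition variants directly. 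That part of your argument is repairable.

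The genuine gap is the sign-pattern count, which is the entire content of Section 2 of the paper and which you correctly flag as ``the main obstacle'' but do not prove. Your proposed recursion $N_d(k)\le O(k^{d+1})\,N_{d-1}(k)$ via forward differences is not substantiated: the sign changes of $P$ are governed by the parities of $\lfloor 2P(i+1)\rfloor-\lfloor 2P(i)\rfloor=\lfloor 2\Delta P(i)\rfloor+c_i$, where the carry $c_i\in\{0,1\}$ depends on the fractional parts $\{2P(i)\}$ and $\{2\Delta P(i)\}$, and these are not determined by the sign pattern of $\Delta P$; two difference polynomials with the same pattern can generate entirely different families of carry sequences, so the induction does not close on the quantity $N_{d-1}(k)$. (Note also that, given the actual polynomial $\Delta P$, the only new degree of freedom in $P$ is one additive constant, which contributes $O(k)$ choices of carry sequence, not $O(k^{d+1})$; the missing factor $k^{d}$ must come from distinguishing difference polynomials with equal patterns, which is exactly the bookkeeping your sketch omits.) The paper proves the bound by a cell argument: every realized pattern occurs on the interior of a cell $\{v_n\le f(n)\le v_n+1\}$, cells up to translation by $H$ are counted through their vertices, each vertex is a polynomial taking integer values at some $(d+1)$-element subset $S$ of $\{1,\dots,k\}$, and the number of such polynomials up to equivalence is $2^{d+1}\prod_{x,y\in S,\,x>y}(x-y)/\prod_{n=1}^d n!$ by a Vandermonde computation; summing over the $\binom{k}{d+1}$ choices of $S$ gives the exponent $(d+1)(d+2)/2$ with an explicit constant. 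Until you supply an argument at this level of precision for the combinatorial bound, the theorem is not proved.
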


\begin{theorem} For each multiplicative function $\lambda: \mathbb N \to \pm 1$, natural number $d$, and sign pattern $\epsilon_1 ,\dots,\epsilon_k$ such that there exists $q \leq k$ with $\phi(q) \geq \left( \frac{(d+1) (d+2) }{2}+ 2\right) \log_2 (q) + \log_2(8/3)$, there exists a dynamical model for $\lambda$ that is $d$-Fourier uniform but in which the sign pattern $\epsilon_1,\dots,\epsilon_k$ does not appear.

\end{theorem}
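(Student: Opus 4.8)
The plan is to leverage the polynomial-phase models constructed for the preceding theorem and to defeat the prescribed pattern by a counting argument over a family of such models indexed by the local data at the modulus $q$. Recall that that construction produces, for the fixed $\lambda$, a $d$-Fourier uniform dynamical model in which $F$ restricted to a $T$-orbit is the sign of a degree-$d$ polynomial phase, and in which the realized length-$k$ patterns number $O(k^{(d+1)(d+2)/2})$. I would first promote this to a family $\{(X,\mu,F_\xi,T,M,I_n)\}_{\xi}$ of valid models for the same $\lambda$, indexed by a choice of sign $\xi(a)\in\{\pm 1\}$ for each reduced residue class $a\in(\mathbb Z/q\mathbb Z)^\ast$, so that the family has exactly $2^{\phi(q)}$ members. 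The key structural claim is that these $\phi(q)$ binary choices are genuinely free: the multiplicativity axiom $F(I_n(x))=\lambda(n)F(x)$ relates the value of $F$ at a point to its values at the other scales reached by the maps $I_n$, and on the classes coprime to $q$ these relations do not couple the $\phi(q)$ classes to one another within a single scale, so each assignment $\xi$ yields a model satisfying all five axioms. Since $\xi$ only alters the period-$q$ factor of $F$, it leaves the top-degree phase, and hence the vanishing of the $U^d$ seminorm, untouched, so every member of the family remains $d$-Fourier uniform.

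Next I would convert appearance of the pattern into an explicit constraint on $\xi$. Writing the value of $F_\xi$ along an orbit as a product of a period-$q$ factor, equal to $\xi$ on the classes coprime to $q$, and the degree-$d$ phase sign $g_i(x)=\sgn(\ldots)$, the pattern $\epsilon_1,\dots,\epsilon_k$ appears in $(X,\mu,F_\xi,T,M,I_n)$ only if there is a positive-measure set of $x$ with offset $c=M(x)\bmod q$ and phase pattern $g=(g_i(x))_{i=1}^k$ such that, at every position $i$ with $c+i$ coprime to $q$, one has $\xi(c+i)=\epsilon_i g_i$. Because $q\le k$, the window $i=1,\dots,k$ meets every residue class modulo $q$, so this system both forces $\epsilon_i g_i$ to depend on $i$ only through $c+i\bmod q$ and, on the unit classes, determines $\xi$ uniquely. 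Consequently each appearance of $\epsilon$ pins $\xi$ to a single function of the residues, and the number of $\xi$ that can possibly be pinned is at most the number of admissible pairs $(c,g)$ modulo this determination.

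I would then bound this number of ``bad'' $\xi$ and compare it with $2^{\phi(q)}$. There are $q$ choices of offset $c$, and for each the datum surviving the determination of $\xi$ is the phase pattern $g$ read off at one representative of each unit class, that is, at most $q$ positions; hence the number of bad $\xi$ is at most $q$ times the number of sign patterns a degree-$d$ polynomial phase can produce on $q$ points. This is exactly the explicit polynomial sign-pattern bound needed for the preceding theorem, evaluated at $q$ in place of $k$; together with the $q$ offsets and the remaining discrete parameters of the window it yields a bound of the form $\tfrac{8}{3}\,q^{(d+1)(d+2)/2+2}$. The hypothesis $\phi(q)\ge\big(\tfrac{(d+1)(d+2)}{2}+2\big)\log_2 q+\log_2(8/3)$ is precisely the statement that this bound is at most $2^{\phi(q)}=|\{\xi\}|$, so by pigeonhole some $\xi$ is not bad; the corresponding model is $d$-Fourier uniform and does not exhibit $\epsilon_1,\dots,\epsilon_k$.

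The main obstacle is twofold. First, one must actually exhibit the family of $2^{\phi(q)}$ valid models, i.e.\ check that freely prescribing the period-$q$ factor of $F$ on the units modulo $q$ is compatible with the multiplicativity axiom for the fixed and genuinely aperiodic $\lambda$; this is where the interplay between the $\hat{\mathbb Z}$-factor $M$ and the division maps $I_n$ must be handled with care, since a careless multiplicative twist would be forced to be constant on unit cosets and would collapse the freedom. Second, and this is the ``elementary but not completely trivial'' input flagged in the abstract, one must prove the explicit bound of order $q^{(d+1)(d+2)/2}$ on the number of sign patterns of a degree-$d$ polynomial phase on $q$ points, with the constant sharp enough to account for the term $\log_2(8/3)$ in the hypothesis. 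Once these are in place, the reduction of appearance to a residue constraint and the final pigeonhole are bookkeeping.
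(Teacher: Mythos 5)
Your overall strategy is exactly the paper's: index a family of $2^{\phi(q)}$ candidate $d$-Fourier uniform models by a sign function $\rho$ on $(\mathbb Z/q\mathbb Z)^\times$, observe that (taking $q=k$ by truncating the pattern) an appearance of $\epsilon_1,\dots,\epsilon_q$ determines $\rho$ once one fixes the residue offset, an auxiliary residue parameter, and the sign pattern of a degree-$d$ polynomial on $q$ points, and then pigeonhole against the bound $q\cdot\phi(q)\cdot c_d q^{(d+1)(d+2)/2}<2^{\phi(q)}$, which is the stated hypothesis after taking logarithms (using $c_d\leq 8/3$ from the explicit sign-pattern count of Section 2). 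The counting half of your argument is sound and lands on the correct exponent $\frac{(d+1)(d+2)}{2}+2$.

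However, the first of your two ``obstacles'' is a genuine gap, and it is the crux of the proof rather than a verification to be deferred. As you yourself essentially observe, twisting $F$ by $\xi(M(x)\bmod q)$ cannot satisfy axiom (5): since $I_n$ replaces $M(x)$ by $M(x)/n$, the axiom would force $\xi(n^{-1}m)=\xi(m)$ for all $n$ coprime to $q$, so $\xi$ would be constant on the units and the family would collapse, leaving nothing to pigeonhole over. The paper's resolution is a concrete ingredient your proposal does not supply: enlarge the space to $X=(\mathbb R^{d+1}/H)\times(\mathbb Z/q)^\times\times\hat{\mathbb Z}$, let $I_{ab}$ (with $a$ coprime to $q$ and $b$ a product of primes dividing $q$) act on the new coordinate $t$ by multiplication by a unit, and set $F(f,t,n)=(-1)^{\lfloor f(0)\rfloor}\overline{\rho}(tn)$, where $\overline{\rho}$ is an extension of $\rho$ to $\hat{\mathbb Z}$ that is insensitive to multiplication by primes dividing $q$. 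The coordinate $t$ transforms under $I_{ab}$ so as to compensate the unit part of the division on the $\hat{\mathbb Z}$ coordinate; this is what restores axiom (5) while preserving the $\phi(q)$ bits of freedom in $\rho$, and it is also the precise source of the extra factor $\phi(q)$ that you attribute vaguely to ``remaining discrete parameters of the window.'' Until this family is actually constructed, the reduction to a residue constraint and the final pigeonhole have no object to apply to.
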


For instance, if $d=1$ we can take $q=29$, so for each $29$-term sign pattern we can find a $1$-uniform dynamical model in which it does not appear. Obviously, there is still a large gap between this construction and the positive results that are known. For fixed values of $d$ one can improve this by greater care with the explicit constants in our argument. However, it does not seem possible to completely close the gap in this way, nor to significantly improve the asymptotics of these bounds as $d$ goes to $\infty$. Another avenue of progress would be to improve the positive results using more tools from dynamical systems theory, in particular the classification of nilsystems.

Our dynamical systems will be created from polynomials of degree $d$. (This is unsurprising, as they are the simplest example of a dynamical system which is $d$-Fourier uniform but not $d+1$-Fourier uniform.) Hence the first step of our work will be a precise count of the number of sign patterns arising from polynomials of degree $d$.

This research was conducted during the period I served as a Clay Research Fellow, during the period I was supported by Dr. Max R\"{o}ssler, the Walter Haefner Foundation and the ETH Zurich Foundation, and during the period I was resident at MSRI. I would like to thank Terence Tao and Joni Ter\"{a}v\"{a}inen for helpful conversations.

\section{Polynomial Sign Patterns}

View $\mathbb R^{d+1}$ as the space of polynomials in one variable of degree $d$.

Let $\sgn: \mathbb R^{d+1} \to \{+1,-1\}^{k}$ be the function that sends a polynomial $f$ to the $k$-tuple of signs whose $n$th element is $(-1)^{\lfloor f(n) \rfloor}$. 

The goal of this section is to bound explicitly the cardinality of the image of $\sgn$ from the space of all degree $d$ polynomials.

\begin{definition} Define a \emph{pre-cell} to be a closed convex subset of $\mathbb R^{d+1}$ of the form $\{f \in \mathbb R^{d+1} | v_n \leq f(n) \leq v_{n}+1 \textrm{ for all }n\textrm{ from }1\textrm{ to }k\}$ for a $k$-tuple of integers $v_1,\dots,v_k$. Say that it is a \emph{cell} if it has nonempty interior. 

Say that two nontrivial cells $C_1,C_2$ are equivalent if $C_2$ is equal to the translation of $C_1$ by a polynomial $f \in \mathbb R^{d+1}$ such that  $f(n) \in 2 \mathbb Z$ for all $n$ in $\mathbb Z$.

\end{definition}

\begin{lemma}\label{open} A polynomial $f$ lies in the interior of a precell if and only if it lies in the precell and $f(n)$ is not an integer for any $n$ from $1$ to $k$. \end{lemma}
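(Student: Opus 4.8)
The plan is to work directly with the linear structure of the evaluation maps. Write $\ell_n : \mathbb{R}^{d+1} \to \mathbb{R}$ for the evaluation-at-$n$ functional $\ell_n(f) = f(n)$. Each $\ell_n$ is a \emph{nonzero} linear functional on the finite-dimensional space $\mathbb{R}^{d+1}$ (nonzero because the constant polynomial $1$ maps to $1$), hence continuous, and the precell is the finite intersection $P = \bigcap_{n=1}^k \ell_n^{-1}([v_n, v_n+1])$ of closed slabs. The first step is to translate the arithmetic hypothesis into an inequality statement: since each $v_n$ is an integer, for $f \in P$ (so that $v_n \le f(n) \le v_n+1$) the value $f(n)$ is an integer precisely when $f(n) \in \{v_n, v_n+1\}$, i.e. precisely when one of the two defining inequalities is an equality. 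Thus for $f \in P$ the hypothesis ``$f(n) \notin \mathbb{Z}$ for all $n$ from $1$ to $k$'' is equivalent to the strict inequalities $v_n < f(n) < v_n+1$ for all such $n$, and it suffices to prove that the interior of $P$ equals the set of $f$ satisfying all these strict inequalities.

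For the inclusion ``strict inequalities $\Rightarrow$ interior'' I would invoke continuity together with finiteness. If $f$ satisfies all $k$ strict inequalities, then because the map $g \mapsto (g(1),\dots,g(k))$ is continuous and there are only finitely many constraints, a sufficiently small ball about $f$ keeps each $g(n)$ inside the open interval $(v_n, v_n+1) \subseteq [v_n, v_n+1]$, so the whole ball lies in $P$ and $f$ is interior. For the reverse inclusion ``interior $\Rightarrow$ strict'' I would argue contrapositively: if $f(n_0) = v_{n_0}$ for some $n_0$, then translating $f$ by $-t$ times the constant polynomial $1$ yields a point whose value at $n_0$ is $v_{n_0}-t < v_{n_0}$, violating a defining inequality; since such points exist for arbitrarily small $t>0$, no neighborhood of $f$ is contained in $P$ and $f$ is not interior. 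The case $f(n_0) = v_{n_0}+1$ is symmetric, translating by $+t$ instead.

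I expect no genuine obstacle here; the statement is essentially the standard fact that the topological interior of a finite intersection of closed slabs cut out by nonzero functionals is the locus where all the inequalities are strict. The only points requiring care are that the finiteness of the family of constraints is what allows a single radius to work in the first inclusion, and that each evaluation functional $\ell_n$ is nonzero, so that in the second inclusion one can always exhibit a direction escaping $P$ across a tight inequality. Note that the inclusion ``strict $\Rightarrow$ interior'' also makes no use of linear independence of the $\ell_n$, which is important since these functionals are necessarily dependent once $k > d+1$.
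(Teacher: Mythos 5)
Your proposal is correct and follows essentially the same route as the paper: the paper likewise identifies the strict-inequality locus as an open subset of the precell for one direction, and for the other direction perturbs $f$ by a small constant (i.e. $f \mp \epsilon$) to exit the precell when some defining inequality is tight. Your version merely spells out the continuity and nonvanishing of the evaluation functionals more explicitly.
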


\begin{proof} If $f$ lies in the precell $\{f \in \mathbb R^{d+1} | v_n \leq f(n) \leq v_{n}+1 \textrm{ for all }n\textrm{ from }1\textrm{ to }k\}$ and takes non-integer values, then $f$ lies in $\{f \in \mathbb R^{d+1} | v_n < f(n) < v_{n}+1 \textrm{ for all }n\textrm{ from }1\textrm{ to }k\}$ which is open and is contained in the precell, and hence $f$ lies in the interior.

If $f$ takes some integer value, then for some $n \in \{1,\dots,k\}$, $f(n)=v_n$ or $f(n)=v_{n+1}$. In the first case $f-\epsilon$ is not in the precell for any positive $\epsilon$, and in the second case $f(n)+\epsilon$ is not in the precell for any positive $\epsilon$, so in neither case is $f$ in the interior.\end{proof}

\begin{lemma}\label{perturbation} Every sign pattern in the image of $\sgn$ is attained for some polynomial whose coefficients lie in the interior of some cell. \end{lemma}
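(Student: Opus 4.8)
The plan is to start with any polynomial $f$ realizing the given sign pattern and nudge it slightly so that it avoids all integer values while keeping every floor unchanged. Concretely, suppose $s = \sgn(f)$, and set $v_n = \lfloor f(n)\rfloor$ for $n$ from $1$ to $k$, so that $v_n \le f(n) < v_n + 1$ and $f$ lies in the precell $P$ cut out by the inequalities $v_n \le g(n) \le v_n + 1$. By Lemma \ref{open}, $f$ fails to lie in the interior of $P$ precisely because of those $n$ for which $f(n)$ is an integer, at which $f(n) = v_n$ sits on the lower face of $P$.

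The key observation is that translating by a small positive constant pushes every such boundary value strictly into the open interval above it without disturbing the values that are already non-integral. I would therefore set $g = f + \epsilon$ for a constant $\epsilon > 0$, which remains a polynomial of degree at most $d$ and hence a point of $\mathbb{R}^{d+1}$. Choosing $\epsilon$ smaller than the minimum over all $n$ from $1$ to $k$ of $v_n + 1 - f(n)$ (a minimum of finitely many strictly positive numbers, hence itself positive) forces $v_n < g(n) < v_n + 1$ for every $n$: at the integer points this reads $v_n < v_n + \epsilon < v_n + 1$, and at the non-integer points it holds because $f(n)$ already lies strictly inside $(v_n, v_n+1)$ and $\epsilon$ is small. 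In particular $g$ takes no integer value, so Lemma \ref{open} places $g$ in the interior of $P$, which is therefore a genuine cell.

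It then remains only to check that $g$ realizes the same sign pattern. Since $\lfloor g(n)\rfloor = v_n = \lfloor f(n) \rfloor$ for all $n$, we have $\sgn(g)_n = (-1)^{v_n} = \sgn(f)_n = s_n$, as desired. There is no serious obstacle here: the single point requiring care is that one $\epsilon$ must work simultaneously at all $k$ evaluation points, but since we only ever need $g$ to move \emph{upward} and the obstruction at each point is a strictly positive margin $v_n + 1 - f(n)$, taking $\epsilon$ below the finite minimum of these margins suffices. The fact that a degree-zero perturbation already does the job means one never has to ask whether a degree-$d$ polynomial can be prescribed positive on an arbitrary subset of the evaluation points.
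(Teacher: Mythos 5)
Your argument is correct and is essentially identical to the paper's proof: both perturb the realizing polynomial by a small positive constant $\epsilon$ to move it into the open interval $(v_n, v_n+1)$ at every evaluation point, preserving all floors and placing it in the interior of the precell via Lemma \ref{open}. Your explicit choice of $\epsilon$ as the minimum of the margins $v_n+1-f(n)$ just makes precise what the paper leaves as ``for $\epsilon>0$ sufficiently small.''
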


\begin{proof} Let $g(x)$ be a polynomial of degree $d$ and let $v(n) = \lfloor g(n) \rfloor$. Consider the precell $\{f \in \mathbb R^{d+1} | v_n \leq f(n) \leq v_{n}+1 \textrm{ for all }n\textrm{ from }1\textrm{ to }k\}$. 

For $\epsilon>0$ sufficiently small, we have $g(n)+ \epsilon \in (\lfloor g(n)\rfloor, \lfloor g(n) \rfloor +1)$.

Hence $\lfloor g(n)+ \epsilon \rfloor = \lfloor g(n) \rfloor$, so $\sgn(g(n)+\epsilon)=\sgn(g(n)$, and $g+ \epsilon \in [v_n,v_{n+1}]$, so $g+\epsilon$ lies in this precell, and finally $g(n)+\epsilon$ takes noninteger values, so it lies in the interior of the precell by Lemma \ref{open}, and thus the precell is in fact a cell.\end{proof}

\begin{lemma}\label{constant} $\sgn$ is constant on the interior of each cell. \end{lemma}

\begin{proof} The function $(-1)^{\lfloor f(n) \rfloor}$ is locally constant everywhere that $f(n)$ is not an integer. For $n$ from $1$ to $k$, $f(n)$ is not an integer on the interior of each cell by Lemma \ref{open}. Because cells are convex bodies, their interiors are connected, so this function is constant on the interior of each cell. Hence $\sgn$ is constant on the interior of each cell. \end{proof}

\begin{lemma}\label{equivalent} On two equivalent cells, $\sgn(f)$ takes the same value. \end{lemma}

\begin{proof} $\sgn(f)$ is invariant under translation by even-integer-valued polynomials, so it takes the same value on two sets where one is a translation of another by such a polynomial. \end{proof}

\begin{lemma}\label{image} The cardinality of the image of $\sgn(f)$ is at most the number of equivalence classes of nontrivial cells. \end{lemma}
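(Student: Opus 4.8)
The plan is to construct an injection from the image of $\sgn$ into the set of equivalence classes of nontrivial cells; the cardinality bound then follows immediately. All the needed ingredients are already in place: Lemma \ref{perturbation} guarantees that every sign pattern in the image is realized at an interior point of some cell, Lemma \ref{constant} says that $\sgn$ is constant on such an interior, and Lemma \ref{equivalent} says that equivalent cells carry the same value of $\sgn$.

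Concretely, I would first choose, for each sign pattern $s$ in the image of $\sgn$, a cell $C_s$ whose interior contains a polynomial realizing $s$; such a cell exists by Lemma \ref{perturbation}. By Lemma \ref{constant}, $\sgn$ is constant on the interior of $C_s$, and since $s$ is attained there, that constant value is exactly $s$. Passing to equivalence classes, this defines a map $\Phi$ from the image of $\sgn$ to the set of equivalence classes of nontrivial cells, sending $s$ to the class of $C_s$.

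Next I would verify that $\Phi$ is injective. Suppose $s_1$ and $s_2$ lie in the image of $\sgn$ with $\Phi(s_1) = \Phi(s_2)$, i.e.\ $C_{s_1}$ and $C_{s_2}$ are equivalent. By Lemma \ref{equivalent}, $\sgn$ takes the same value on two equivalent cells, so the constant value $s_1$ on the interior of $C_{s_1}$ equals the constant value $s_2$ on the interior of $C_{s_2}$; hence $s_1 = s_2$. An injection from the image of $\sgn$ into the set of equivalence classes of nontrivial cells shows that the image has cardinality at most the number of such classes, which is precisely the claim.

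I would expect no genuine obstacle here, since every step is supplied by the preceding lemmas; the only point requiring care is that the value assigned to a cell is well defined and equals the sign pattern being realized on its interior, which is exactly the combination of Lemmas \ref{perturbation} and \ref{constant}.
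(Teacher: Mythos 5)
Your proof is correct and follows essentially the same route as the paper's: both use Lemma \ref{perturbation} to place each realized sign pattern in the interior of a cell, Lemma \ref{constant} for constancy on interiors, and Lemma \ref{equivalent} to pass to equivalence classes. The only difference is presentational --- you package the argument as an explicit injection from the image of $\sgn$ into the set of equivalence classes, while the paper states the equivalent fact that each equivalence class carries at most one sign pattern.
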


\begin{proof} Every sign pattern in the image of $f$ is attained in the interior of some nontrivial cell by Lemma \ref{perturbation}. Because the sign pattern is constant on each cell by Lemma \ref{constant}, and equal for equivalent cells by Lemma \ref{equivalent}, there is at most one sign pattern appearing in each equivalence class of cells. \end{proof}

\begin{lemma} Any vertex of any nontrivial cell is a polynomial $f$ that takes integer values on at least $d+1$ elements of $\{1,\dots,k\}$. \end{lemma}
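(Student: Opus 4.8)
The plan is to interpret a cell as a polytope in the $(d+1)$-dimensional vector space $\mathbb{R}^{d+1}$ and to apply the standard description of its vertices through active constraints. For each $n$ let $\ell_n \colon \mathbb{R}^{d+1} \to \mathbb{R}$ be the evaluation functional $\ell_n(f) = f(n)$, which is linear in the coefficients of $f$. A pre-cell is then cut out by the $2k$ affine inequalities $\ell_n(f) \ge v_n$ and $\ell_n(f) \le v_n + 1$ for $n$ from $1$ to $k$, so it is an intersection of closed half-spaces, i.e.\ a (possibly unbounded) polyhedron, and a nontrivial cell is such a polyhedron with nonempty interior.

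Next I would recall the characterization of a vertex: a point $f$ of a polyhedron in $\mathbb{R}^{d+1}$ is a vertex precisely when the linear functionals defining the constraints that are active (hold with equality) at $f$ span the whole dual space $(\mathbb{R}^{d+1})^\ast$. (If they spanned only a proper subspace, one could move off $f$ in both directions along a line annihilated by all active functionals while remaining in the polyhedron for a short time, contradicting extremality.) In particular a vertex must have at least $d+1$ linearly independent active constraints.

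The key bookkeeping observation is that, for each fixed $n$, the two constraints $\ell_n(f) \ge v_n$ and $\ell_n(f) \le v_n+1$ share the single functional $\ell_n$ and cannot be simultaneously active, since that would force $v_n = v_n+1$. Hence the functionals active at a vertex $f$ all lie in $\{\ell_n : n \in S\}$, where $S \subseteq \{1,\dots,k\}$ is the set of indices at which some constraint is active, and the rank of this set is at most $|S|$. Comparing with the previous paragraph gives $|S| \ge d+1$. Finally, for every $n \in S$ the active constraint forces $f(n) \in \{v_n, v_n+1\} \subseteq \mathbb{Z}$, so $f$ takes an integer value at each of these at least $d+1$ points of $\{1,\dots,k\}$, which is the claim.

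I expect the only real care to lie in the vertex characterization and in the ``at most one active constraint per $n$'' step; both are elementary, but the latter is exactly what converts $d+1$ linearly independent active constraints into $d+1$ \emph{distinct} integer-valued points rather than merely $d+1$ active inequalities. One could also invoke Lagrange interpolation to see that any $d+1$ of the $\ell_n$ at distinct points are in fact linearly independent, which would give the converse statement, but this direction is not needed here.
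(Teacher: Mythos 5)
Your proof is correct and is essentially the paper's argument recast in the standard language of active constraints: the paper proves the contrapositive by exhibiting the perturbation direction explicitly as $\epsilon\prod_{i=1}^m(x-n_i)$ with $m\le d$, which is precisely a nonzero element of the common kernel of the active evaluation functionals $\ell_{n_1},\dots,\ell_{n_m}$ whose abstract existence you invoke. Both arguments come down to the same observation that fewer than $d+1$ active functionals cannot pin down a point of $\mathbb{R}^{d+1}$.
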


\begin{proof} Suppose that some polynomial $f(x)$ takes non-integer values at all points in $\{1,\dots,k\}$ except $n_1,\dots,n_m$ for some $m \leq d$. We must show $f$ is not a vertex. Then for $|\epsilon|$ sufficiently small, $f(x) + \epsilon \prod_{i=1}^m (x-n_i)$ lies in the same cell as $f(x)$, and so $f(x)$ is in the convex hull of two points $f(x)+\epsilon \prod_{i=1}^m (x-n_i)$ and $f(x)-\epsilon \prod_{i=1}^m (x-n_i)$ of the same cell and thus is not a vertex. \end{proof}

\begin{lemma}\label{shelah} A polynomial $g(x)$ that takes integer values at exactly $m \geq d+1$ elements of $\{1,\dots,k\}$ is a vertex of at most $2 \sum_{i=0}^{d} {m-1 \choose i} \leq 2^{d+1} {m \choose d+1}$ nontrivial cells. \end{lemma}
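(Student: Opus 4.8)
The plan is to set up a correspondence between the nontrivial cells having $g$ as a vertex and the sign patterns realized by auxiliary degree-$d$ polynomials, and then to bound the latter by a Sauer--Shelah-type count. Let $n_1 < \dots < n_m$ be the elements of $\{1,\dots,k\}$ at which $g$ takes integer values, and write $c_j = g(n_j) \in \mathbb Z$. First I would observe that any precell containing $g$ is nearly forced: at a point $n$ with $g(n)\notin\mathbb Z$ the only integer $v_n$ with $v_n \le g(n)\le v_n+1$ is $\lfloor g(n)\rfloor$, whereas at each $n_j$ there are exactly two admissible choices, $v_{n_j}=c_j$ or $v_{n_j}=c_j-1$. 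Thus the precells containing $g$ are indexed by a sign vector $(s_1,\dots,s_m)\in\{\pm1\}^m$, where $s_j=+1$ records the choice $v_{n_j}=c_j$ (so the cell lies on the side $f(n_j)\ge c_j$) and $s_j=-1$ records $v_{n_j}=c_j-1$ (the side $f(n_j)\le c_j$).

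Next I would identify which of these precells are nontrivial and have $g$ as a vertex. Writing the tangent direction as $u=f-g$, a precell has nonempty interior exactly when there is a polynomial $u$ of degree $\le d$ with $s_j\,u(n_j)>0$ for every $j$, that is, with $\operatorname{sign}(u(n_j))=s_j$; at the non-integer points the constraints are slack at $g$ and impose nothing for small perturbations. Moreover, whenever $g$ lies in such a cell it is automatically a vertex: the active equality constraints $f(n_j)=c_j$ for $j=1,\dots,m$ include $d+1$ evaluation functionals at distinct points, which are linearly independent on $\mathbb R^{d+1}$ by Lagrange interpolation, so these constraints have $g$ as their unique common solution, forcing $g$ to be a $0$-dimensional face. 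Consequently the number of nontrivial cells having $g$ as a vertex equals the number of sign vectors $(s_1,\dots,s_m)$ that occur as $(\operatorname{sign}(u(n_1)),\dots,\operatorname{sign}(u(n_m)))$ for some polynomial $u$ of degree $\le d$ nonvanishing at each $n_j$.

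It then remains to count these realizable sign patterns, which is the step carrying the real content (the ``elementary but not completely trivial'' ingredient promised in the abstract). Since $n_1<\dots<n_m$ and a nonzero polynomial of degree $\le d$ has at most $d$ real roots, the intermediate value theorem shows that the sequence $\operatorname{sign}(u(n_1)),\dots,\operatorname{sign}(u(n_m))$ changes sign at most $d$ times. I would then bound the number of vectors in $\{\pm1\}^m$ with at most $d$ sign changes: the initial sign may be chosen in $2$ ways, and the set of sign-change positions is an arbitrary subset of the $m-1$ consecutive gaps of size at most $d$, giving $\sum_{i=0}^{d}\binom{m-1}{i}$ choices, hence the bound $2\sum_{i=0}^{d}\binom{m-1}{i}$ overall. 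Finally the inequality $2\sum_{i=0}^{d}\binom{m-1}{i}\le 2^{d+1}\binom{m}{d+1}$ is a routine estimate (with equality at $m=d+1$, where both sides equal $2^{d+1}$).

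The main obstacle is less any single computation than making the correspondence airtight: I must check that the slack non-integer constraints never interfere, that nontriviality of the cell is genuinely equivalent to solvability of the strict system $s_j\,u(n_j)>0$, and that linear independence of $d+1$ evaluations really pins $g$ down to a vertex rather than a higher-dimensional face. Once these are in place, the polynomial sign-change count is the crux, and it is exactly the classical bound on the number of sign sequences produced by a low-degree polynomial.
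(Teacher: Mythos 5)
Your proposal is correct and follows essentially the same route as the paper: index the precells containing $g$ by the binary choice $v_{n_j}=g(n_j)$ or $g(n_j)-1$ at the $m$ integer-value points, use Lemma \ref{open} to extract from any nontrivial such cell a polynomial $u=f-g$ of degree at most $d$ realizing the corresponding sign vector, and bound the realizable vectors by the count $2\sum_{i=0}^{d}\binom{m-1}{i}$ of sequences with at most $d$ sign changes. Your additional observations (that $g$ is automatically a vertex, and that the correspondence is an equality rather than just an upper bound) are true but not needed for the stated inequality.
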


\begin{proof} Let $S=\{n \in \mathbb N | 1 \leq n \leq k, g(n) \in \mathbb Z\}$.  Suppose that \[\{f\in \mathbb R^{d+1} | v_n \leq f(n) \leq v_{n}+1 \textrm{ for all }n\textrm{ from }1\textrm{ to }k\} \] is a precell containing $g$ as a vertex Then for any $n \in \{1,\dots,k\}-S$, we must have $v_n = \lfloor g(n) \rfloor$. At any $n \in S$, we have either $v_n = g(n)$ or $v_n =g(n)-1$. So there are $2^m$ precells containing $g$ as a vertex, determined by subsets $T \subseteq S$ where the precell corresponding to $T$ has $v_n=g(n)$ for $n \in T$ and $v_n=g(n)-1$ for $n \not \in T$. 

If the precell corresponding to $T$ is a cell, then by Lemma \ref{open} there is a polynomial $f$ such that $f(n) \in (g(n),g(n)+1)$ for $n \in T$ and $f(n) \in (g(n)-1,g(n))$ for $n \in S-T$. In particular, $f(n)-g(n)>0$ for $n \in T$ and $f(n)-g(n) < 0 $ for $n \in S-T$. Let the elements of $S$ be $n_1,\dots,n_m$. Then if $n_j \in T$ and $n_{ j+1} \in S-T$, or vice versa, $f(n)-g(n)$ has a root between $n_j$ and $n_{j+1}$, so there can only be $d$ such values of $j$. Hence the number of cells is at most the number of sign patterns on $\{1,\dots,m\}$ that only change signs at most $d$ times, which is \[2\sum_{i=0}^{d} {m-1 \choose i} \leq \sum_{i=0}^{d+1} {m \choose i} \leq  2^{d+1} {m \choose d+1}.\]

 \end{proof}

\begin{lemma}\label{translation} Suppose $f$ is a vertex of a cell $C$, and $g$ is a polynomial that takes even integer values at integer arguments. Then $f+g$ is a vertex of the equivalent cell $C+g$. \end{lemma}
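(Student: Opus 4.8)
The plan is to observe that the translation map $t_g : \mathbb{R}^{d+1} \to \mathbb{R}^{d+1}$, $h \mapsto h+g$, is an affine bijection carrying $C$ onto $C+g$, and to use that such maps preserve vertices. The work is entirely formal once this is set up, so I would organize the proof into a quick verification that $C+g$ is a genuine cell equivalent to $C$, followed by the vertex-preservation argument.

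First I would confirm that $C+g$ is a (nontrivial) cell and is equivalent to $C$. Writing $C = \{f \in \mathbb{R}^{d+1} \mid v_n \le f(n) \le v_n + 1 \textrm{ for } 1 \le n \le k\}$, translating by $g$ gives $C+g = \{h \mid v_n + g(n) \le h(n) \le v_n + g(n) + 1\}$. Since $g(n) \in 2\mathbb{Z} \subseteq \mathbb{Z}$ for every integer $n$, the tuple $(v_n + g(n))$ consists of integers, so $C+g$ is a pre-cell; and because $t_g$ is a homeomorphism it takes the interior of $C$ onto the interior of $C+g$, so $C+g$ has nonempty interior and is therefore a cell. By the definition of the equivalence relation, $C$ and $C+g$ are equivalent, the hypothesis that $g$ is \emph{even}-integer-valued being exactly what that relation demands.

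For the vertex claim I would argue directly from the extreme-point description of a vertex that is already in use in the preceding lemmas. Suppose, for contradiction, that $f+g$ is not a vertex of $C+g$. Then there are distinct polynomials $h_1, h_2 \in C+g$ and a scalar $\lambda \in (0,1)$ with $f+g = \lambda h_1 + (1-\lambda) h_2$. Subtracting $g$ from both sides yields $f = \lambda(h_1 - g) + (1-\lambda)(h_2 - g)$, where $h_1 - g$ and $h_2 - g$ are distinct points of $C$; hence $f$ is a nontrivial convex combination of two distinct points of $C$ and so is not a vertex of $C$, contradicting the hypothesis. Equivalently, one may simply invoke that the affine bijection $t_g$ sends extreme points of $C$ to extreme points of $t_g(C) = C+g$.

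I do not expect any real obstacle, as the statement is the standard fact that translations preserve the vertex structure of a polytope; the only points needing care are confirming that $C+g$ is a cell rather than merely a pre-cell (which uses that $g$ is integer-valued together with the fact that translation preserves interiors) and ensuring the even-integer hypothesis is deployed precisely where it is needed, namely to conclude that $C$ and $C+g$ are \emph{equivalent} rather than merely congruent.
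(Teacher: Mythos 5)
Your proof is correct and takes essentially the same approach as the paper's: translation by $g$ carries the defining inequalities of $C$ to those of the pre-cell $C+g$ (with shifted integers $v_n + g(n)$), sends interior points to interior points so that $C+g$ is a genuine cell, and preserves extreme points. The paper's own proof is terser and leaves the vertex-preservation step implicit, which you spell out correctly via the convex-combination argument.
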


\begin{proof} If $C$ is of the form $\{f \in \mathbb R^{d+1} | v_n \leq f(n) \leq v_{n}+1 \textrm{ for all }n\textrm{ from }1\textrm{ to }k\}$, we can take $v_{n'}= v_n+O(g_n)$ to define a precell $C+g$. Adding $g$ to an interior point of $C$ will produce an interior point of $C+g$. \end{proof}

\begin{lemma} Assume $k \geq d+1$. Then each cell has at least $d+2$ vertices. \end{lemma}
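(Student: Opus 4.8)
The plan is to recognize each cell as a full-dimensional bounded convex polytope in $\mathbb{R}^{d+1}$ and then apply the elementary fact that a $(d+1)$-dimensional polytope must have at least $d+2$ vertices. The only step that uses the hypothesis $k \geq d+1$ is boundedness; everything after that is general convex geometry.

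First I would establish that a cell is bounded. Consider the evaluation map $E \colon \mathbb{R}^{d+1} \to \mathbb{R}^k$ sending $f$ to $(f(1), \dots, f(k))$. Since $k \geq d+1$ and a polynomial of degree at most $d$ that vanishes at $d+1$ distinct points is identically zero, $E$ is injective, hence a linear isomorphism onto its image. A cell is the preimage under $E$ of the box $\prod_{n=1}^k [v_n, v_n+1]$, which is bounded; as $E$ is a linear isomorphism onto its image, its inverse is continuous on that image, so the cell is bounded. Concretely, Lagrange interpolation at the points $1, \dots, d+1$ writes the coefficients of $f$ as fixed linear combinations of $f(1), \dots, f(d+1)$, and on the cell each $f(n)$ is confined to $[v_n, v_n+1]$, so the coefficients stay bounded. (Without $k \geq d+1$ this fails, since a cell would then contain the nonzero subspace of polynomials vanishing at $1, \dots, k$ and so be unbounded.)

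Being bounded, a cell is the convex hull of its finitely many vertices. By definition it has nonempty interior, so its affine hull is all of $\mathbb{R}^{d+1}$ and it has dimension $d+1$. On the other hand, the convex hull of $N$ points lies in an affine subspace of dimension at most $N-1$. Taking $N$ to be the number of vertices of the cell, we obtain $N-1 \geq d+1$, that is $N \geq d+2$, as required.

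The main obstacle, such as it is, lies entirely in the boundedness step: one must rule out that a cell stretches to infinity along directions in which the sampled values $f(1), \dots, f(k)$ do not change. This is precisely what $k \geq d+1$ excludes, by forcing $E$ to be injective and thereby pinning down all of $f$ from its constrained values; once this is secured, the vertex count follows immediately from the dimension-counting argument above.
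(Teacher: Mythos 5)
Your proof is correct and takes essentially the same route as the paper, which simply asserts that cells are bounded closed convex bodies with nonempty interior in $\mathbb{R}^{d+1}$ and hence have at least $d+2$ vertices. You have merely filled in the details the paper leaves implicit: boundedness via injectivity of the evaluation map (the only place $k \geq d+1$ is used) and the dimension count showing a full-dimensional polytope needs at least $d+2$ vertices.
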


\begin{proof} The definition implies they are bounded closed convex bodies with nonempty interior in $\mathbb R^{d+1}$, hence have at least $d+2$ vertices. \end{proof}

\begin{lemma}\label{switching} The number of nontrivial cells, up to equivalence, is at most $2^{d+1}/(d+2)$ times the number of pairs of a polynomial $f(x)$ of degree $d$ and a subset $S$ of $[1,\dots,k]$ of cardinality $d+1$ on which $f(x)$ takes integer values, up to the equivalence relation where two pairs $(f_1,S_1),(f_2,S_2)$ are equivalent if $f_1-f_2$ takes only even integer values on integer arguments and $S_1=S_2$. \end{lemma}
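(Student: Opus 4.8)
The plan is to prove the inequality by a double count of the incidences between cells and their vertices, carried out entirely at the level of equivalence classes. Let $G$ denote the group of polynomials of degree at most $d$ that take even integer values at every integer argument. Then $G$ acts by translation on the space $\mathbb{R}^{d+1}$ of polynomials, and hence on cells (by Lemma \ref{translation}, $f \mapsto f+g$ carries the vertices of $C$ to the vertices of $C+g$), on vertices, and on the pairs $(f,S)$ of the statement, where the equivalence on pairs is exactly the $G$-action on the $f$-component with $S$ held fixed. Since every cell is a bounded set, translation by a nonzero element of $G$ moves each cell off itself; hence the $G$-action on cells, and on the combined datum of a cell together with one of its vertices, is free. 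Freeness is the only feature of the equivalence I will need.

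Define an \emph{incidence class} to be an equivalence class of pairs $(C,f)$ with $C$ a cell and $f$ a vertex of $C$, where $(C_1,f_1)\sim (C_2,f_2)$ when $C_2=C_1+g$ and $f_2=f_1+g$ for some $g\in G$. First I would bound the number of incidence classes from below. The map sending an incidence class to the class of its cell is surjective onto the equivalence classes of nontrivial cells, and because the action is free its fibre over the class of $C$ is in bijection with the set of vertices of $C$. By the lemma that each cell has at least $d+2$ vertices (valid since $k\geq d+1$), every fibre has size at least $d+2$, so the number of incidence classes is at least $d+2$ times the number of equivalence classes of nontrivial cells.

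Next I would bound the number of incidence classes from above in terms of pairs. The map sending an incidence class $[(C,f)]$ to the equivalence class of the vertex $f$ is well-defined, and freeness identifies its fibre over the class of $f$ with the set of cells having a fixed representative $f$ as a vertex. Every such $f$ takes integer values on a set of size $m\geq d+1$, by the lemma that any vertex is integral at at least $d+1$ points, so Lemma \ref{shelah} bounds that fibre by $2^{d+1}\binom{m}{d+1}$. Summing over vertex classes, the number of incidence classes is at most $2^{d+1}\sum_{[f]}\binom{m_f}{d+1}$, where $m_f$ is the number of integer points of $f$. The key observation is that $\binom{m_f}{d+1}$ is exactly the number of pairs obtained from the vertex $f$ by choosing a $(d+1)$-element subset $S$ of its integer points; since the equivalence on pairs acts on the $f$-component precisely by $G$ while fixing $S$, distinct choices of vertex class and of $S$ produce inequivalent pairs, all of which are among the pairs being counted. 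Hence $\sum_{[f]}\binom{m_f}{d+1}$ is at most the total number of equivalence classes of pairs, and combining the two bounds gives that $d+2$ times the number of cell classes is at most $2^{d+1}$ times the number of pair classes, which is the assertion.

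The main obstacle I anticipate is not the arithmetic but the bookkeeping of the three compatible equivalences—on cells, on vertices, and on pairs—together with the verification that the two forgetful maps have exactly the fibre sizes claimed. Both fibre computations rest entirely on the freeness of the translation action, which in turn uses only that cells are bounded. Once freeness is established, the lower bound is immediate from the at-least-$(d+2)$-vertices lemma and the upper bound from substituting the count of Lemma \ref{shelah}, so the inequality follows by comparing the two expressions for the number of incidence classes.
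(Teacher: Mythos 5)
Your proposal is correct and follows essentially the same route as the paper: a double count of equivalence classes of (cell, vertex) incidence pairs, bounded below by $d+2$ times the number of cell classes using the at-least-$(d+2)$-vertices lemma, and above by $2^{d+1}$ times the number of pair classes using Lemma \ref{shelah}. Your explicit treatment of the freeness of the translation action (via boundedness of cells) is just a more careful rendering of the paper's remark that no nontrivial translation sends a cell to itself.
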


\begin{proof}Consider the set of pairs of a cell and a vertex of that cell, and the equivalence relation where two pairs are equivalent if one is the translation of the other by a polynomial that takes even integer values on integer arguments. Each cell has at least $d+2$ vertices. Furthermore, for each cell the pairs of that cell and a vertex are in distinct equivalence classes, because no nontrivial translation sends the cell to itself and no trivial translation sends a vertex to a different vertex. So the number of such equivalence classes is at least $d+2$ times the number of equivalence classes of cells.

Let $f$ be a polynomial. For any polynomial $g$ that takes even integer values at integer arguments, every cell $C$ containing $f+g$ as a vertex is a translate of a cell containing $f$ as a vertex. So the number of equivalence classes of pairs of a cell and a vertex of that cell where the vertex is a translate of $f$ is at most the number of cells containing $f$ as a vertex, which by Lemma \ref{shelah} is at most $2^{d+1}$ times the number of $d+1$-element subsets of the points in $\{1,\dots,k\}$ where $f$ takes integer values.

Hence the total number of equivalence classes of pairs of a cell and a vertex of that cell is at most $2^{d+1}$ times the sum over equivalence classes of $f$ of the number of $d+1$-element subsets $S \subseteq \{1,\dots,k\}$ where $f$ has integer values, which is the number of equivalence classes of pairs as in the statement of the lemma. \end{proof}

\begin{lemma}\label{bhargava} For any $S$ of $d+1$ integers, the cardinality of the set of polynomials $f$ of degree $d$ that take integer values on $S$, up to translation by the set of polynomials that take even integer values on integer arguments, is \[ 2^{d+1}\frac{  \prod_{\substack{x,y \in S\\ x>y } } (x-y)   }{\prod_{n=1}^d n!} .\] \end{lemma}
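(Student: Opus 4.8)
The plan is to recognize all three relevant sets as full-rank lattices in the $(d+1)$-dimensional real vector space of polynomials of degree at most $d$, and to compute the desired count as a lattice index via covolumes.

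First I would set up the evaluation isomorphism. Write $S=\{s_0,\dots,s_d\}$ and let $\phi\colon \mathbb R^{d+1}\to\mathbb R^{d+1}$ send a polynomial $f$ to $(f(s_0),\dots,f(s_d))$; by Lagrange interpolation this is a linear isomorphism. The polynomials taking integer values on $S$ are then exactly $L_S:=\phi^{-1}(\mathbb Z^{d+1})$, a lattice. Meanwhile let $\mathrm{Int}$ denote the lattice of polynomials of degree $\le d$ taking integer values at every integer, with the standard basis $\binom{x}{0},\binom{x}{1},\dots,\binom{x}{d}$, and let $E=2\,\mathrm{Int}$ be the polynomials taking even integer values at every integer (indeed $g$ takes even integer values iff $g/2\in\mathrm{Int}$). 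Since $S$ consists of integers we have inclusions $E\subseteq\mathrm{Int}\subseteq L_S$, and the quantity to be computed is the number of cosets $|L_S/E|=[L_S:E]$.

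Next I would factor the index as $[L_S:E]=[L_S:\mathrm{Int}]\cdot[\mathrm{Int}:E]$. The second factor is immediate: $[\mathrm{Int}:2\,\mathrm{Int}]=2^{d+1}$ since $\mathrm{Int}$ has rank $d+1$. For the first factor I would pass to covolumes through $\phi$: since $\phi(L_S)=\mathbb Z^{d+1}$ has covolume $1$, the index $[L_S:\mathrm{Int}]$ equals the covolume of $\phi(\mathrm{Int})$, i.e.\ the absolute value of the determinant of the matrix whose $(j,i)$ entry is $\binom{s_j}{i}$. The key step is to evaluate this determinant: writing each $\binom{x}{i}$ in the monomial basis shows the matrix factors as $VA$, where $V$ is the Vandermonde matrix $V_{jl}=s_j^{\,l}$ and $A$ is upper triangular with diagonal entries equal to the leading coefficients $1/i!$ of the $\binom{x}{i}$. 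Hence the determinant is $\det V\cdot\prod_{i=0}^d \tfrac{1}{i!}=\prod_{\substack{x,y\in S\\ x>y}}(x-y)\big/\prod_{n=1}^d n!$ (using $0!=1$).

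Combining, $[L_S:E]=2^{d+1}\prod_{x>y}(x-y)\big/\prod_{n=1}^d n!$, as claimed. The main obstacle — though entirely elementary — is the determinant evaluation, whose conceptual content is simply that the binomial basis triangularizes against the monomial basis with the factorials on the diagonal, so the geometrically natural Vandermonde volume gets divided by $\prod_{n=1}^d n!$. A minor point worth checking is that ``degree $d$'' versus ``degree $\le d$'' does not affect the count: every coset of $E$ contains a representative of degree exactly $d$ (one may always add a suitable multiple of $2\binom{x}{d}\in E$), so the equivalence classes are the same either way.
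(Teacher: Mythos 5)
Your proof is correct and takes essentially the same approach as the paper: both compute the count as a lattice index $[L_S:2\,\mathrm{Int}]$ via multiplicativity, with the $2^{d+1}$ coming from $[\mathrm{Int}:2\,\mathrm{Int}]$ and the remaining factor being a Vandermonde determinant divided by $\prod_{n=1}^d n!$. The only cosmetic difference is that the paper inserts the lattice of integer-coefficient polynomials as an intermediate step (contributing the $\prod n!$ and the Vandermonde factor separately), whereas you evaluate $[L_S:\mathrm{Int}]$ in one shot by triangularizing the binomial-coefficient matrix against the monomial basis.
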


\begin{proof}  Because both of these sets of polynomials are groups, equivalence classes are elements of the quotient group. So we must prove that the index of the group of polynomials that take even values on integers inside the group of polynomials that take integer values on $S$ is $2^{d+1}\frac{  \prod_{\substack{x,y \in S\\ x>y } } (x-y)   }{\prod_{n=1}^d n!}$. Because indices are multiplicative in iterated extensions, that follows from three facts:

First, the group of polynomials of degree $d$ that take integer values of integers  is a lattice of rank $d+1$, and the group taking even values on integers is twice that lattice, so the index of the group of polynomials taking even values on integers inside the group taking integer values on integers is $2^{d+1}$.

Second, the index of the group of polynomials with integer coefficients inside the group taking integer values on integers is $\prod_{n=1}^d n!$.

Third, the index of the group of polynomials with integer coefficients inside the group taking integer values on $S$ is the order of the cokernel of the map from polynomials with integer coefficients to integer-valued functions on $S$, which is the absolute value of the $d+1 \times d+1$ Vandermonde determinant associated to $S$, which is $\prod_{\substack{x,y \in S\\ x>y } } (x-y)  $.
 \end{proof}

\begin{theorem}\label{mainbound} The image of $\sgn(f)$ is at most \[ \frac{4^{d+1}}{d+2}\sum_{ \substack{ S \subseteq \{1,\dots,k\} \ |S| =d+1} }  \frac{ \prod_{\substack{x,y \in S\\ x>y }}(x-y) }{\prod_{n=1}^d n!}  \leq  c_d k^{ (d+2)(d+1)/2} \]

where  $c_1 = 8/3$ and $c_2=4/3$, and $c_d \leq 1$ for all $d \geq 3$. \end{theorem}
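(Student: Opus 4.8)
The plan is to obtain the first (exact) inequality by concatenating the structural lemmas already established, and the second by a crude but sufficient size estimate on the resulting sum. First I would combine Lemma \ref{image}, Lemma \ref{switching}, and Lemma \ref{bhargava}. By Lemma \ref{image} the cardinality of the image of $\sgn$ is bounded by the number of equivalence classes of nontrivial cells; by Lemma \ref{switching} this is at most $\frac{2^{d+1}}{d+2}$ times the number of equivalence classes of pairs $(f,S)$ with $|S|=d+1$ and $f$ integer-valued on $S$. Grouping these pairs by $S$ and applying Lemma \ref{bhargava} to count, for each fixed $S$, the number of admissible $f$ up to translation by even-integer-valued polynomials as $2^{d+1}\prod_{\substack{x,y\in S\\ x>y}}(x-y)/\prod_{n=1}^d n!$, then summing over all $(d+1)$-element $S\subseteq\{1,\dots,k\}$, yields exactly
\[ \frac{2^{d+1}}{d+2}\sum_{\substack{S\subseteq\{1,\dots,k\}\\ |S|=d+1}} 2^{d+1}\frac{\prod_{\substack{x,y\in S\\ x>y}}(x-y)}{\prod_{n=1}^d n!} = \frac{4^{d+1}}{d+2}\sum_{\substack{S\subseteq\{1,\dots,k\}\\ |S|=d+1}}\frac{\prod_{\substack{x,y\in S\\ x>y}}(x-y)}{\prod_{n=1}^d n!}, \]
which is the first bound in the theorem.

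For the second inequality I would estimate the sum directly. Each set $S$ of size $d+1$ contributes a Vandermonde product of $\binom{d+1}{2}=d(d+1)/2$ factors, each a difference of two distinct elements of $\{1,\dots,k\}$ and hence at most $k-1<k$; thus $\prod_{\substack{x,y\in S\\ x>y}}(x-y)\leq k^{d(d+1)/2}$. Since there are $\binom{k}{d+1}\leq k^{d+1}/(d+1)!$ such subsets, the sum is at most $k^{d+1+d(d+1)/2}/(d+1)! = k^{(d+1)(d+2)/2}/(d+1)!$. Substituting and recording the constant gives
\[ \frac{4^{d+1}}{d+2}\sum_{\substack{S\subseteq\{1,\dots,k\}\\ |S|=d+1}}\frac{\prod_{\substack{x,y\in S\\ x>y}}(x-y)}{\prod_{n=1}^d n!} \leq c_d\, k^{(d+1)(d+2)/2}, \qquad c_d = \frac{4^{d+1}}{(d+2)\,(d+1)!\,\prod_{n=1}^d n!}. \]

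Finally I would verify the claimed numerical values. Using $(d+1)!\prod_{n=1}^d n! = \prod_{n=1}^{d+1} n!$, the constant simplifies to $c_d = 4^{d+1}/\big((d+2)\prod_{n=1}^{d+1} n!\big)$. Direct evaluation gives $c_1 = 16/6 = 8/3$ and $c_2 = 64/48 = 4/3$, while $c_3 = 256/1440 < 1$. For $d\geq 3$ I would argue $c_d\leq 1$ by monotonicity: passing from $c_d$ to $c_{d+1}$ multiplies the numerator by $4$ and the denominator by $\frac{d+3}{d+2}(d+2)!\geq 5!>4$, so $c_{d+1}<c_d$, and the conclusion follows from $c_3<1$. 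The only step requiring any care is this last check, and it is entirely elementary; I expect no genuine obstacle here, since the crude factor-by-factor bound already reproduces exactly the advertised constants, and essentially all the substance of the theorem has been front-loaded into the cell-counting lemmas.
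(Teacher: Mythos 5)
Your proposal is correct and follows essentially the same route as the paper: chain Lemma \ref{image}, Lemma \ref{switching}, and Lemma \ref{bhargava} for the exact bound, then estimate the Vandermonde product by $k^{d(d+1)/2}$ and the number of subsets by $k^{d+1}/(d+1)!$, arriving at the identical constant $c_d = 4^{d+1}/\bigl((d+2)!\prod_{n=1}^d n!\bigr)$ (your $(d+2)(d+1)!$ is just $(d+2)!$). The numerical checks and the monotonicity argument for $d\geq 3$ also match the paper's.
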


\begin{proof} By Lemma \ref{image}, this is at most the number of nontrivial cells up to equivalence. Thus by Lemma \ref{switching}, it is at most $\frac{2^{d+1}}{d+2}$ times the sum over sets $S$ of size $d+1$ of the number of polynomials $f$ that take integer values of $S$ up to the equivalence relation where $f_1$ and $f_2$ are equivalent if $f_1 -f_2$ takes even integer values on integer arguments. By Lemma \ref{bhargava}, this is \[  \sum_{\substack{ S \subseteq \{1,\dots, k\} \\ |S|= d+1 }} \frac{4^{d+1}}{d+2} \frac{\prod_{\substack{x,y \in S\\ x>y } } (x-y)}{\prod_{n=1}^d n!} .\]

So it remains to prove the second inequality. Trivially, we have \[ \prod_{\substack{x,y \in S\\ x>y } }(x-y) \leq  \prod_{\substack{x,y \in S\\ x>y }} k =  k^{d (d+1)/2}. \]Then the number of subsets of $\{1,\dots k\}$ of size $d+1$ is ${ k \choose d+1} \leq \frac{k ^{d+1}}{ (d+1)!}$. So we can take \[c_d = \frac { 4^{d+1}}{(d+2)! \prod_{n=1}^d n!}\] which gives $c_1=8/3$, $c_2 = 4/3$, $c_3= 16/ 45$, and $c_{d+1}/c_d = 4/ (d+3) (d+1)! \leq 1$ for $d \geq 1$. 

\end{proof}

\section{Dynamical Models of $\lambda$}

We are now ready to prove our main results.

\begin{theorem}\label{main1} For each multiplicative function $\lambda: \mathbb N \to \pm 1$ and natural number $d$, there exists a $d$-Fourier uniform dynamical model for $\lambda$ such that the only length $k$ sign patterns that appear in $(X, \mu, F, T)$ are those that occur in the image of the map $\sgn$ on the space of degree $d$ polynomials, and the number of distinct sign patterns that appear is $O(k^{(d+1)(d+2)/2})$. \end{theorem}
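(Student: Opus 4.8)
The plan is to construct the dynamical model explicitly using polynomial phases, since the theorem asserts that the sign patterns appearing are exactly those in the image of $\sgn$ on degree $d$ polynomials, and the counting bound $O(k^{(d+1)(d+2)/2})$ then follows immediately from Theorem \ref{mainbound}. So the real content is the construction and verification of the axioms.

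First I would build the space. Following the remark after the first definition, I expect to take $X = \mathbb{R}^{d+1}/(\text{even-integer-valued polynomials}) \times \hat{\mathbb{Z}}$, or something closely analogous, where the $\mathbb{R}^{d+1}$ factor records a degree $d$ polynomial $f$ whose job is to produce the sign $F = (-1)^{\lfloor f(0)\rfloor}$, i.e. $F(f,m) = \sgn$ evaluated at the constant term in the sense of the previous section. The transformation $T$ should act by $f(x) \mapsto f(x+1)$ on the polynomial (incrementing the argument) and by $m \mapsto m+1$ on $\hat{\mathbb{Z}}$, so that iterating $T$ reads off the sign pattern $((-1)^{\lfloor f(i)\rfloor})_i$, which is exactly a point in the image of $\sgn$. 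The division maps $I_n$ must implement the substitution $f(x) \mapsto f(nx)$ together with a twist by $\lambda(n)$ to enforce axiom (5); concretely something like $I_n(f,m) = (\lambda\text{-twist of } f(nx), m/n)$, mirroring the $I_n(f,m) = (h\mapsto \lambda(n)f(nh), m/n)$ formula in the remark. I would take $\mu$ to be (the pushforward of) Lebesgue/Haar measure, normalized to a probability measure on the compact quotient, times Haar measure on $\hat{\mathbb{Z}}$; a key point is that the substitution $f(x)\mapsto f(nx)$ scales the relevant lattice so that the pushforward of $\mu$ under $I_n$ comes out to $\mu/n$ as required, matching the normalization in the definition and the factor $1/2$ remark.

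The main verification is then routine but must be done carefully: axioms (1)--(4) are purely about the interaction of $M$, $T$, and $I_n$ on the $\hat{\mathbb{Z}}$ coordinate and the polynomial substitution, and follow from the identities $M(T(x)) = M(x)+1$, the compatibility $f(n(mx)) = f((nm)x)$ giving axiom (4), and the intertwining $T(I_n(x)) = I_n(T^n(x))$ coming from $f(n(x+1)) = f(nx+n)$, which is axiom (3). Axiom (5) is where $\lambda$ enters, and I would arrange the twist so that $F(I_n(x)) = \lambda(n)F(x)$ holds using multiplicativity of $\lambda$; this is the step that genuinely uses the hypothesis that $\lambda$ is multiplicative, and I expect it to require a compatible choice of signs along the chain $I_m \circ I_n = I_{nm}$ so that axiom (4) and axiom (5) do not conflict. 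I would verify $d$-Fourier uniformity by noting that $F$ is built from a degree $d$ polynomial phase, and the $U^{d+1}$-type box product $\prod_{\delta} F(T^{\sum \delta_i h_i}x)$ kills all degree $d$ behavior after taking $d+1$ discrete derivatives, while the $U^d$ average I must show vanishes does not collapse --- here the cleanest route is to invoke that a nondegenerate degree $d$ polynomial sequence is $d$-Fourier uniform (its $U^d$ Gowers seminorm vanishes) as a standard equidistribution fact, checking also the versions with congruence conditions on $h_1,\dots,h_d$ and $M(x)$, which hold because the polynomial phase equidistributes along arithmetic progressions as well.

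The hard part, I expect, will be twofold: first, setting up $\mu$ and the quotient so that all five axioms and the measure-scaling condition $I_{n*}\mu = \mu/n$ hold simultaneously and consistently with the sign twist --- the sign twist must be a genuine cocycle over the chain of division maps, and getting it to satisfy (4) and (5) together is the delicate combinatorial bookkeeping. Second, I must confirm that the sign patterns appearing in the system are \emph{exactly} those in the image of $\sgn$, not merely contained in it: the containment is clear since every orbit reads off $(-1)^{\lfloor f(i)\rfloor}$ for some polynomial $f$, but the reverse (that every such pattern occurs with positive measure) requires that the set of $f$ giving a fixed pattern is a cell of positive Lebesgue measure, which is precisely what Lemma \ref{constant} and Lemma \ref{perturbation} guarantee. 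Granting both, the count $O(k^{(d+1)(d+2)/2})$ is immediate from Theorem \ref{mainbound}, completing the proof.
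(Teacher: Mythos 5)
Your proposal is correct and follows essentially the same route as the paper: the same torus $\mathbb R^{d+1}/H$ times $\hat{\mathbb Z}$, the same $F$, $T$, $M$, the division maps $I_n$ given by $f(x)\mapsto f(nx)$ with a $\lambda$-dependent integer shift (the paper uses $+\tfrac{1+\lambda(n)}{2}$), Fourier uniformity via equidistribution of the degree-$d$ polynomial phase, and the count imported from Theorem~\ref{mainbound}. The only slight inaccuracy is attributing the $\mu/n$ scaling to the lattice dilation --- the polynomial substitution pushes Haar measure to Haar measure, and the factor $1/n$ comes entirely from the $\hat{\mathbb Z}$ coordinate --- but this does not affect the argument.
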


\begin{proof}  Let $\mathbb R^{d+1}$ be the space of polynomials of degree $d$, $H$ the lattice in $\mathbb R^{d+1}$ of polynomials that take even integer values on integer arguments, and $\mathbb R^{d+1}/H$ the quotient torus. 

Let $X = (\mathbb R^{d+1}/H) \times \hat{\mathbb Z}$. Let $\mu$ be the Haar measure on $X$. Let $F: X \to \pm 1$ send a pair $(f(x),n)$ of a polynomial and an element of $\hat{\mathbb Z}$ to $(-1)^{\lfloor f(0) \rfloor } $. Let $T$ send a pair $(f(x),n)$ to $(f(x+1),n+1)$. Let $M$ send a pair $(f(x),n)$ to $n$. For any natural number $a$  let $I_{a}$ send a pair $(f(x),n)$ where $a$ divides $n$ to $(f(ax) + \frac{1+\lambda(a)}{2}, \frac{n}{a})$.

It is easy to verify that $T$ is measure-preserving because it is a group automorphism on the first factor, trivial on the second, and translation by a group element on the third, all of which preserve the Haar measure. It is also easy to verify that $M$ preserves the measure.

Similarly, it is easy to verify all the algebraic identities among $M,T,I_{ab},F$ - in particular note that  $\frac{1+\lambda(a)}{2}$ is an integer and $(-1)^{ \frac{1+\lambda(a)}{2}} = \lambda(a)$.     

To prove $d$-Fourier uniformity, we must show that \[ \lim_{H \to \infty} \mathbb E_{1 \leq h_1,\dots, h_d \leq H} \mathbb E_{f \in \mathbb R^{d+1}/H} \prod_{ \delta_1,\dots, \delta_d \in \{0,1\}}  (-1)^ {\lfloor f (\sum_{i=1}^d \delta_i h_i) \rfloor} =0 .\] To do this, using Fourier series we can write $(-1)^{\lfloor f(n) \rfloor }$ as a linear combination of terms $e^{ 2\pi i \alpha f(n)} $ where $\alpha$ is a half-integer and in particular is nonzero. So the whole product is a sum of terms of the form \[e^{ 2\pi i  \sum_{\delta_1,\dots,\delta_d \in \{0,1\}}  \alpha_{\delta_1,\dots,\delta_d} f \left(\sum_{i=1}^d \delta_i h_i  \right)}.\] The expectation of this term over the space of polynomials vanishes unless \[\sum_{\delta_1,\dots,\delta_d \in \{0,1\}}  \alpha_{\delta_1,\dots,\delta_d} f \left(\sum_{i=1}^d \delta_i h_i\right)  \] is identically zero for all polynomials $f$, and otherwise it is $1$.  Then the expectation of this term over $h_i$ vanishes unless  $\sum_{\delta_1,\dots,\delta_d \in \{0,1\}}  \alpha_{\delta_1,\dots,\delta_d} f (\sum_{i=1}^d \delta_i h_i)  $ is identically zero for all $h_1,\dots,h_k$, as otherwise it can only be identically zero for a density zero fraction of $h_1,\dots,h_k$. To show this is impossible, and thus the expectations of all the terms vanish, it suffices to show that for some $f$, the terms $f (\sum_{i=1}^d \delta_i h_i) $ are linearly independent polynomials in the $h_i$ for distinct $\delta_i$. To do this, choose $f$ with all coefficients nonzero, and observe that the coefficient of $\prod_i h_i^{\delta_i}$ in $f (\sum_{i=1}^d \delta_i' h_i) $ is zero unless $\delta_i \leq \delta_i'$ for all $i$ and is nonzero if $\delta_i = \delta_i'$ for all $i$, so these coefficients form an upper-triangular matrix with nonzero diagonal, hence an invertible matrix. The same argument works with added congruence conditions.

The fact that the only sign patterns that occur are those in the definition of $\sgn(f)$ follows because the definitions of $F$ and $T$ match the definition of $\sgn$ exactly. The bound for the number of such sign patterns follows from Theorem \ref{mainbound}.

\end{proof} 

In the case $d=1$, the dynamical system we construct here matches the dynamical system $(\mathbb T^2, m_{\mathbb T^2}, T)$ defined in \cite[p. 7]{fhsuper} and also studied there as a counterexample for the Liouville function.

Furthermore, for each $\lambda$, $d$, and sign pattern $\epsilon_1,\dots,\epsilon_k$ with $k$ sufficiently large with respect to $d$, we will demonstrate the existence of a dynamical model for $\lambda$ which is $d$-Fourier uniform but in which the sign pattern $\epsilon_1,\dots,\epsilon_k$ does not appear.

Specifically we will prove:

\begin{theorem}\label{main2} For each multiplicative function $\lambda: \mathbb N \to \pm 1$, natural number $d$, and sign pattern $\epsilon_1 ,\dots,\epsilon_k$ such that there exists $q \leq k$ with $\phi(q) \geq \left( \frac{(d+1) (d+2)}{2}+ 2\right) \log_2 (q) +\log_2 (c_d)$, there exists a dynamical model for $\lambda$ that is $d$-Fourier uniform but in which the sign pattern $\epsilon_1,\dots,\epsilon_k$ does not appear. (Here $c_d$ is as in Theorem~\ref{mainbound}.)

\end{theorem}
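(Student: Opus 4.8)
The plan is to reduce the whole statement to a counting problem at length $q$ and then locate a good model inside a large family by pigeonhole. First I would pass from $k$ to $q$: it suffices to produce a model in which the length-$q$ initial segment $(\epsilon_1,\dots,\epsilon_q)$ does not appear, since any occurrence of the full length-$k$ pattern at a base point would in particular exhibit this segment at the same point. Having reduced to length $q$, I dispose of the easy case: if $(\epsilon_1,\dots,\epsilon_q)$ is not itself in the image of $\sgn$ on degree $d$ polynomials, then the model of Theorem~\ref{main1} already avoids it and we are done, so I may assume it is a polynomial sign pattern. This is exactly the regime where Theorem~\ref{mainbound} applies at length $q$, giving at most $c_d q^{(d+1)(d+2)/2}$ length-$q$ patterns coming from degree $d$ polynomials.

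Next I would build, for each sign function on the units $(\mathbb Z/q)^{*}$, a variant of the model of Theorem~\ref{main1}, producing a family of $2^{\phi(q)}$ dynamical models for the same $\lambda$ whose length-$q$ pattern sets are the image of $\sgn$ modified by the chosen sign data along the residues modulo $q$. Verifying $d$-Fourier uniformity of each variant should be routine: as in the proof of Theorem~\ref{main1}, imposing congruence conditions on the $h_i$ and on $M(x)$ makes any periodic decoration constant on each congruence class, so it factors out of the Gowers-Host-Kra average and leaves the polynomial contribution, which vanishes by the upper-triangularity argument already given. The delicate point is to keep the multiplicative axiom $F(I_n(x))=\lambda(n)F(x)$ valid for every $n$: a naive periodic twist of $F$ by $M(x)\bmod q$ is forced by this axiom to be constant on $(\mathbb Z/q)^{*}$, so the sign data must instead be introduced through the multiplicative structure itself, for instance via a completely multiplicative companion $\psi$, using that a model for $\lambda\psi$ times a model for $\psi$ is again a model for $\lambda$ because $\psi^2=1$. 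Reconciling the $2^{\phi(q)}$ degrees of freedom with both this axiom and $d$-Fourier uniformity is the heart of the construction.

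Finally I would conclude by counting. A fixed length-$q$ pattern can appear in only a controlled number of members of the family: each polynomial pattern of length $q$ is matched by essentially one choice of sign data, and I would allow a further factor of at most $q^2$ of slack to absorb the overhead of enforcing multiplicativity and the choices at the non-unit residues. Thus the number of members in which $(\epsilon_1,\dots,\epsilon_q)$ appears is at most $q^2$ times the number of length-$q$ degree $d$ polynomial patterns, i.e. at most $c_d q^{(d+1)(d+2)/2+2}$. The hypothesis $\phi(q)\ge\bigl(\tfrac{(d+1)(d+2)}{2}+2\bigr)\log_2 q+\log_2 c_d$ is precisely the inequality $2^{\phi(q)}\ge c_d q^{(d+1)(d+2)/2+2}$, so strictly fewer than all $2^{\phi(q)}$ members exhibit the segment, and any remaining member is a $d$-Fourier uniform dynamical model for $\lambda$ in which $\epsilon_1,\dots,\epsilon_k$ does not appear. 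The main obstacle is the second step: producing genuinely distinct pattern behaviour indexed by $(\mathbb Z/q)^{*}$ while keeping every member an honest $\lambda$-model is what forces the multiplicative-companion device and accounts for the extra factor of $q^2$ in the hypothesis, whereas the uniformity and the final counting are comparatively routine.
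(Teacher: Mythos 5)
Your architecture matches the paper's: reduce to length $q$, build a family of $2^{\phi(q)}$ twisted variants of the polynomial model indexed by sign functions $\rho$ on $(\mathbb Z/q)^\times$, check $d$-Fourier uniformity of each, bound the number of family members in which a fixed length-$q$ pattern can appear by $q\phi(q)\cdot c_d\, q^{(d+1)(d+2)/2}\leq c_d\, q^{(d+1)(d+2)/2+2}$, and pigeonhole; your ``factor of $q^2$ of slack'' is exactly the paper's $q\phi(q)$ (one factor for the residue of the base point, one for the extra group coordinate). You also correctly identify the obstruction: a naive twist of $F$ by $\rho(M(x)\bmod q)$ is incompatible with the axiom $F(I_a(x))=\lambda(a)F(x)$ unless $\rho$ is essentially constant on $(\mathbb Z/q)^\times$.

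However, the step you defer --- realizing $2^{\phi(q)}$ genuinely distinct twists compatibly with the $I_n$ axiom --- is the actual content of the proof, and the device you gesture at would not deliver it. A completely multiplicative $\pm1$-valued companion $\psi$ whose effect on a length-$q$ window depends only on residues mod $q$ is a homomorphism $(\mathbb Z/q)^\times\to\{\pm1\}$, of which there are at most $2^{\omega(q)+1}$, nowhere near $2^{\phi(q)}$, so the pigeonhole collapses; if instead $\psi$ is an arbitrary completely multiplicative function on $\mathbb N$, then producing a dynamical model for $\psi$ whose length-$q$ window realizes a prescribed arbitrary sign function on the units is precisely the problem being solved, so the reduction is circular. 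The paper's resolution is different: enlarge the space to $(\mathbb R^{d+1}/H)\times(\mathbb Z/q)^\times\times\hat{\mathbb Z}$, let $I_{ab}$ act on the new coordinate $t$ by $t\mapsto a^{-1}t$, and set $F(f,t,n)=(-1)^{\lfloor f(0)\rfloor}\overline{\rho}(tn)$ for a suitable extension $\overline{\rho}$ of the \emph{arbitrary, non-multiplicative} $\rho$; the auxiliary coordinate $t$ absorbs the failure of multiplicativity, so the axiom holds for every $\rho$, and the rest of your argument (uniformity unaffected by the periodic factor; $\rho$ determined by $\epsilon$, the polynomial sign pattern, $t$, and $n\bmod q$, since $n+i$ sweeps all residues as $i$ runs from $1$ to $q$) then goes through as you describe. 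Without this construction the proof does not close.
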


Because $c_d \leq 1$ for $d \leq 3$, we may ignore that term for large $d$, and take $q$ asymptotic to $d^2 \log d$. We would not obtain an asymptotic here if we we did not keep track of the dependence on $d$ in Section 2.

The dynamical system in this proof is an extension of an idea due to Joni Ter\"{a}v\"{a}inen, which was essentially the $d=0$ case.

\begin{proof} Take such a $d,k,q$. Without loss of generality, we may assume $q=k$, because if $\epsilon_1,\dots,\epsilon_q$ does not appear then neither does $\epsilon_1,\dots,\epsilon_q,\dots,\epsilon_k$.

Let $\rho: \left(\mathbb Z/q\mathbb Z\right)^\times \to \pm 1$ be a function. We will construct a dynamical model for $\lambda$ associated to each such $\rho$. We will check that they satisfy the axioms and that they are $d$-Fourier uniform. Then we will check that for at least one such $\rho$, the sign pattern $\epsilon_1,\dots,\epsilon_q$ does not appear.

Let $\overline{\rho}$ be the unique function on $\hat{\mathbb Z}$ satisfying $\overline{\rho}(n)=\rho(n)$ if $n$ is invertible modulo $q$, $\overline{\rho}(pn)=\overline{\rho}(n)$ for all primes $p$ dividing $q$, and $\overline{\rho}(n)=1$ if the projection of $n$ to $\mathbb Z_p$ is $0$ for some $p$ dividing $q$.

 Let $\mathbb R^{d+1}$ be the space of polynomials of degree $d$, $H$ the lattice in $\mathbb R^{d+1}$ of polynomials that take even integer values on integer arguments, and $\mathbb R^{d+1}/H$ the quotient torus. 

Let $X = (\mathbb R^{d+1}/H) \times (\mathbb Z/q)^\times \times \hat{\mathbb Z}$. Let $\mu$ be the Haar measure on $X$. Let $F: X \to \pm 1$ send a tuple $(f(x),t,n)$ of a polynomial, a residue class mod $q$, and an element of $\hat{\mathbb Z}$ to $(-1)^{\lfloor f(0) \rfloor } \overline{\rho}(tn)$. Let $T$ send a tuple $(f(x),t,n)$ to $(f(x+1),t,n+1)$. Let $M$ send a tuple $(f(x),t,n)$ to $n$. For any natural numbers $a,b$ where $a$ is relatively prime to $q$ and $b$ is a product of primes dividing $q$, let $I_{ab}$ send a tuple $(f(x),t,n)$ where $ab$ divides $n$ to $(f(abx) + \frac{1+\lambda(ab)}{2}, a^{-1} t, \frac{n}{ab})$.

It is easy to verify that $T$ is measure-preserving because it is a group automorphism on the first factor, trivial on the second, and translation by a group element on the third, all of which preserve the Haar measure. It is also easy to verify that $M$ preserves the measure.

Similarly, it is easy to verify all the algebraic identities among $M,T,I_{ab},F$ - in particular note that  $\frac{1+\lambda(ab)}{2}$ is an integer and $(-1)^{ \frac{1+\lambda(ab)}{2}} = \lambda(ab)$.     

The proof of $d$-Fourier uniformity is the same as in the proof of Theorem~\ref{main1}, except that we have an additional periodic term, but this term does not modify the averaging over polynomials and so does not affect the argument.

Suppose a sign pattern $\epsilon_1,\dots,\epsilon_q$ appears in this dynamical model.  Then there exists a polynomial $f$, $t \in \mathbb Z/q\mathbb Z$, and $n \in \hat{\mathbb Z}$ such that for all $i$ from $1$ to $q$, $\epsilon_i = (-1)^{\lfloor f(i) \rfloor } \overline{\rho}(t(n+i))$. Hence there exist a polynomial $f$, $t \in \mathbb Z/q\mathbb Z$, and $n$ in $\mathbb Z/q$ such that for all $i$ from $1$ to $q$ with $n+i$ invertible mod $\mathbb Z$,  $\epsilon_i =  (-1)^{\lfloor f(i) \rfloor }\rho(t(n+i))$.  Hence $\rho$ is uniquely determined by $\epsilon_1,\dots,\epsilon_q$, the sign pattern $(-1)^{\lfloor f(i) \rfloor }$ for $n$ from $1$ to $q$, $t$, and $n$ mod $q$.  Hence the number of possible values of $\rho$ for which $\epsilon_1,\dots,\epsilon_q$ appears in the associated dynamical system is at most $ q \phi(q)   c_d q^{(d+1)(d+2)/2} $. If this is $<  2^{\phi(q)}$, then there exists a value of $\rho$ such that $\epsilon_1,\dots,\epsilon_q$ appears. Taking logs, we get the stated inequality.

\end{proof}

\begin{remark} The dynamical systems constructed here have other nice properties, which can be checked directly from the definitions. In particular, they satisfy the $k$-point Chowla conjecture for all $k\leq d+1$ and $k$ odd.  For $k\leq d+1$, this is because the values of a polynomial at any $d+1$ points are independent, and for $k$ odd, this is because the dynamical systems in question are isomorphic to those obtained by negating $F$, with the isomorphism given by adding $1$ to $f$. \end{remark}

\begin{remark}We can also show in certain cases that the Chowla conjecture fails for these dynamical systems at the first possible point. For example, we can show that for $d+2$ a power of two, the dynamical system of Theorem \ref{main1} does not satisfy $d+2$-point Chowla. Indeed

\[ \mathbb E_{x \in X} \prod_{t=1}^{d+2} F(T^tx)  = \mathbb E_{ f\in \mathbb R^{d+1}/H} \prod_{t=1}^{d+2} (-1)^{ \lfloor f(t) \rfloor }  =  \mathbb E_{ f\in \mathbb R^{d+2}/H} \prod_{t=1}^{d+2} \sum_{\substack {\alpha \in \mathbb Z \\ k \textrm{ odd} }} \frac{ 2 e^{ \pi i \alpha f(t)}}{\pi k} \] \[= \frac{2^{d+2}}{\pi^{d+2}} \sum_{\substack {\alpha_1,\dots, \alpha_{d+2}  \in \mathbb Z \\ \alpha_t \textrm{ odd} }}   \frac{1}{\prod_{t=1}^{d+2} \alpha_t} \mathbb E_{ f\in \mathbb R^{d+1}/H} e^{ \pi i \sum_{t=1}^{d+2} \alpha_t f(t)}.\]

Now this expectation vanishes unless $\sum_{t=1}^{d+2} \alpha_t f(t)$ is identically zero, in which case it is $1$.  Because the space of polynomials of degree $d+1$ is codimension one inside the space of all functions on $\{1,\dots,d+2\}$, this is identically zero if and only if $\alpha_t$ is proportional to $(-1)^t {d+1 \choose t-1}$, which gives the unique linear form that vanishes on this space. Thus we set $\alpha_t = \alpha (-1)^t {d+1 \choose t-1}$. Because $d+2$ is a power of $2$, these binomial coefficients are odd, so we have all $\alpha_t$ odd integers if and only if $\alpha$ is an odd integer, and thus the sum is

\[  \frac{2^{d+2}}{\pi^{d+2}}  \sum_{ \substack{ \alpha \in \mathbb Z\\ \alpha\textrm{ odd}}} (-1)^{ \frac{d+2}{2}} \frac{1}{\alpha^{d+2} \prod_{i=0}^{d+1} { d+1 \choose i}}  =  \frac{ 2^{d+3} (-1)^{ \frac{d+2}{2}}  \zeta(d+2)}{ \pi^{d+2}  (1- 2^{-d-2}) \prod_{i=0}^{d+1} { d+1 \choose i}} \neq 0 .\]
 For instance, if $d=2$ this produces a dynamical system that satisfies 1-Fourier uniformity, 2-point Chowla, and odd-point Chowla but not 4-point Chowla. \end{remark}

\begin{remark} The relation of dynamical models for $\lambda$ to the axioms of \cite[Proposition 1]{tfurstenburg} is as follows: 

Let $(X, \mu, T, M,I_n)$ be  a dynamical model for $\lambda$ the standard Liouville function. Consider the inverse limit $(X',\mu')$ of the system of measurable spaces $( M^{-1}( n \mathbb Z), n \mu)$ under the measure-preserving transformations $I_m: M^{-1}( nm \mathbb Z) \to M^{-1}(n \mathbb Z)$. Let the transformation $\phi(n) = an+b$ act on  $X'$ by the limit of the maps $M^{-1} ( an \mathbb Z) \to M^{-1} (n \mathbb Z)$   defined by $ T^{nb} $ composed with the obvious inclusion $M^{-1} ( an \mathbb Z) \subseteq M^{-1} (n \mathbb Z)$. We can pul back $F$ and $M$ to functions on $X'$.

Then the data of $(X',\mu',F,M)$ plus the action of $\phi$ satisfy axioms (ii), (iii), and (iv) of Tao's Proposition 1, and may or may not satisfy axiom (i). 

The reason that the inverse limit construction is required to pass between the two sets of axioms is as follows. A dynamical model of $\lambda$ ``keeps track" of the nearby values $\dots, \lambda(n-1), \lambda(n), \lambda(n+1),\lambda(n+2),\dots$ as well as the remainder of $n$ modulo another natural number $m$. Using this, say if $n \equiv a \mod m$, it is possible to find also the nearby values to $\lambda\left( \frac{n-a}{m}\right)$, as \[\lambda \left( \frac{n-a}{m} + k\right) = \frac{ \lambda(n-a + mk)}{\lambda(m)}.\] The map $I_m$ keeps track of this division, but it does not make the dynamical system any more complicated because the information is already contained in $\lambda$. On the other hand, the axioms of \cite{tfurstenburg} involve a multiplication-by-$m$ map, which means that the value of the dynamical system at a point congruent to $n$ must keep track of the values of $\lambda$ near $nm$ for each natural number $m$. Because this is more information, the simplest examples of dynamical systems satisfying the axioms lie on more complicated spaces. In particular, the inverse limit turns out to be just the right construction to add this extra data.\end{remark}

\section{Positive results}

In this section, we describe two simple arguments that complement Theorems \ref{main1} and \ref{main2} by proving existence results for an arbitrary sign pattern and lower bounding the total number of sign patterns that appear.   It will be convenient, rather than using assumptions on Fourier uniformity, to assume the following version of Chowla's conjecture. For some appropriate $k$, for all $c_1,\dots,c_k \in \mathbb N$, we have

\begin{equation}\label{chowla} \mathbb E_{x\in X} \prod_{i=1}^k F( T^{c_i} x) =0.\end{equation}

Afterwards, we will explain the relevance of Equation~\ref{chowla} and thus these results to dynamical models for Liouville.

\begin{proposition}\label{back2} Let $r\geq 1$ be a natural number and let $(X,\mu, F, T)$ be a dynamical system satisfying Equation~\ref{chowla} for all $1\leq k \leq 2 r  +1$. Then all sign patterns of length $2 r+2$ appear in $(X,\mu,F,T)$. \end{proposition}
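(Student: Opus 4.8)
The plan is to analyze the joint law of the $2r+2$ sign-valued observables $F(T x), F(T^2 x), \dots, F(T^{2r+2} x)$ through Fourier analysis on the cube $\{\pm 1\}^{2r+2}$, reducing the appearance of every length-$(2r+2)$ pattern to a single inequality on the top correlation. Fix a pattern $\epsilon_1, \dots, \epsilon_{2r+2}$ and write the indicator of $\{x : F(T^i x) = \epsilon_i,\ 1 \le i \le 2r+2\}$ as $\prod_{i=1}^{2r+2} \tfrac{1}{2}\bigl(1 + \epsilon_i F(T^i x)\bigr)$. Expanding the product and integrating gives $p(\epsilon) := \mu\{x : F(T^i x) = \epsilon_i \ \forall i\} = 2^{-(2r+2)} \sum_{S \subseteq \{1,\dots,2r+2\}} \bigl(\prod_{i \in S} \epsilon_i\bigr)\, \mathbb{E}_x \bigl[\prod_{i \in S} F(T^i x)\bigr]$. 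For each $S$ with $1 \le |S| \le 2r+1$ the shifts $\{i : i \in S\}$ are distinct and at most $2r+1$ in number, so the associated correlation vanishes by Equation~\ref{chowla} (which I read as applying to distinct shifts, the only case in which it can hold). Thus only $S = \emptyset$ and $S = \{1,\dots,2r+2\}$ contribute, and $p(\epsilon) = 2^{-(2r+2)}\bigl(1 + c \prod_{i=1}^{2r+2} \epsilon_i\bigr)$, where $c := \mathbb{E}_x\bigl[\prod_{i=1}^{2r+2} F(T^i x)\bigr]$ is the top-order $(2r+2)$-point correlation, the one order not covered by the hypothesis.

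Since $\prod_i \epsilon_i = \pm 1$, the positivity of $p(\epsilon)$ for all $2^{2r+2}$ patterns is exactly equivalent to $|c| < 1$, so the whole proposition reduces to ruling out $c = \pm 1$. Suppose $c = \eta$ with $\eta \in \{\pm 1\}$; then $\prod_{i=1}^{2r+2} F(T^i x) = \eta$ for a.e.\ $x$. Replacing $x$ by $Tx$, which preserves almost-everywhere identities since $T$ is measure-preserving, gives $\prod_{i=1}^{2r+2} F(T^{i+1} x) = \eta$ a.e.\ as well. As $F$ is $\pm 1$-valued, dividing one relation by the other cancels the common factors $F(T^2 x), \dots, F(T^{2r+2} x)$ and leaves $F(T^{2r+3} x)\, F(T x) = 1$, i.e.\ $F(T^{2r+3} x) = F(T x)$ for a.e.\ $x$. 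But then $\mathbb{E}_x\bigl[F(T x) F(T^{2r+3} x)\bigr] = \mathbb{E}_x\bigl[F(T x)^2\bigr] = 1$, while this is a two-point correlation at the distinct shifts $1$ and $2r+3$, hence zero by Equation~\ref{chowla} (legitimate because $2 \le 2r+1$ when $r \ge 1$). This contradiction gives $|c| < 1$, whence $p(\epsilon) > 0$ for every pattern.

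I expect the Fourier reduction in the first step to be entirely routine; the real content, and the main obstacle, is the final step showing $|c| < 1$. The key idea there is that the single undetermined top correlation cannot be extremal, because $c = \pm 1$ would pin the product $\prod_i F(T^i x)$ to a constant, and shift-invariance then converts this into a genuine two-point coincidence $F(T^{2r+3} x) = F(T x)$ that is forbidden by the lower-order hypothesis. It is precisely here that one uses correlations at shifts outside the window $\{1,\dots,2r+2\}$, so the full strength of Equation~\ref{chowla} for all shifts (not merely consecutive ones) is essential; the parity $2r+2$ of the window is what makes the top correlation the unique survivor and hence makes this single inequality the whole game.
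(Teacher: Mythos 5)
Your proof is correct and follows essentially the same route as the paper: the same expansion of the indicator over subsets $S\subseteq\{1,\dots,2r+2\}$, the same reduction to the single top correlation $c$, and the same key device of comparing the product at $x$ with the product at $Tx$ so that everything telescopes down to the two-point correlation at shifts $1$ and $2r+3$. The only difference is cosmetic: the paper runs this comparison quantitatively (using $|a+b|=1+ab$ for signs $a,b$) to conclude $|c|\le 1/2$ outright, whereas you run it as a rigidity argument ruling out the extremal case $|c|=1$; both hinge on exactly the same two-point cancellation, which is available since $2\le 2r+1$.
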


This is a slight variant of an argument independently discovered by myself and Kaisa Matom\"{a}ki, which appeared in \cite[Proposition 7.1]{ttodd}.

\begin{proof} Let $k = 2 r+2$ and let $\epsilon_1,\dots, \epsilon_k$ be a sign pattern of length $k$. Then the measure of the set of $x$ where $F(T^{i} x) = \epsilon_i$ for all $i$ from $1$ to $k$ is \[ \mathbb   E_{x\in X} \frac{1}{2^k}  \sum_{ S \subseteq \{1,\dots,k\}} \prod_{i \in S} \epsilon_i F(T^i x) = \frac{1}{2^k}  \sum_{ S \subseteq \{1,\dots,k\}}  \mathbb E_{x \in X}\prod_{i \in S} \epsilon_i \mathbb E_{x \in X}  F(T^i x) .\] For any nonempty $S$ other than $\{1,\dots,k\}$, $0 < |S|  \leq 2  r +1$ and so by assumption the expectation vanishes. Because the contribution of the empty set is $1$, it suffices to show that  $|\mathbb E_{x \in X}\prod_{i \in \{1,\dots,k\}} F(T^i x) | <1$. In fact, we will show it is at most $1/2$, because 

\[ 2|\mathbb E_{x \in X}\prod_{i \in \{1,\dots,k\}} F(T^i x) | = | \mathbb E_{x \in X}\prod_{i \in \{1,\dots,k\}} F(T^i x)  + \mathbb E_{x \in X}\prod_{i \in \{1,\dots,k\}}F(T^{i+1} x)  | \] \[  \leq \mathbb E_{x\in X} | \prod_{i \in \{1,\dots,k\}} F(T^i x)  + \prod_{i \in \{1,\dots,k\}} F(T^{i+1} x)|  = \mathbb E_{x \in X}  (1 + \prod_{i \in 1 ,\dots, k} F(T^{i} x) F(T^{i+1} x) ) \] \[= \mathbb E_{x\in X} 1 + \mathbb E_{x\in X} F(T x) F(T^{k+1} x) =1\] using on the second and third lines that $F$ is $\pm 1$ valued and, in the last step, Equation~\ref{chowla}. \end{proof}

\begin{proposition}\label{back1} Let $d \geq 0$ be a natural number number and let $(X,\mu, F, T)$ be a dynamical system satisfying Equation~\ref{chowla} for all $1\leq k \leq 2 r +1$. Then the number of $m$-term sign patterns appearing in $(X, \mu, F, T)$ is at least  $\frac{ 2 m^{r}}{(2r-1)!!} $.\end{proposition}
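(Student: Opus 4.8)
The plan is to recast the conclusion as a lower bound on the support size of a probability measure on $\{-1,1\}^m$, and to extract that bound from the vanishing of low-order correlations by a rank (orthogonal-array) argument. Let $p$ be the law of the random vector $(F(Tx),\dots,F(T^m x))$ when $x$ is drawn from $\mu$. Since a pattern appears exactly when the corresponding atom has positive measure, the number of length-$m$ sign patterns that appear is precisely $N:=|\operatorname{supp}(p)|$, and our goal is $N\ge 2m^r/(2r-1)!!$. For $S\subseteq\{1,\dots,m\}$ write $\chi_S(w)=\prod_{i\in S}w_i$ for $w\in\{-1,1\}^m$. Then the Walsh coefficient $\widehat p(S)=\mathbb E_{w\sim p}\,\chi_S(w)=\mathbb E_{x\in X}\prod_{i\in S}F(T^i x)$ is a correlation of $F$ at $|S|$ distinct shifts, so Equation~\ref{chowla} forces $\widehat p(S)=0$ whenever $1\le|S|\le 2r+1$, while $\widehat p(\emptyset)=1$.

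The mechanism is that $L^2(p)$ is a vector space of dimension exactly $N$, so the size of any orthonormal family of functions is a lower bound for $N$. Because $w_i^2=1$ we have $\chi_S\chi_{S'}=\chi_{S\triangle S'}$, and hence $\langle\chi_S,\chi_{S'}\rangle_{L^2(p)}=\widehat p(S\triangle S')$. Thus a family $\{\chi_S\}_{S\in\mathcal F}$ is orthonormal the moment every symmetric difference of two distinct members has cardinality in the range $[1,2r+1]$ where $\widehat p$ vanishes. Taking $\mathcal F$ to be all $S$ with $|S|\le r$ gives $|S\triangle S'|\le 2r$ and already yields $N\ge\sum_{i=0}^r\binom{m}{i}\sim m^r/r!$.

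The crucial step, and the source of the factor $2$ in the stated bound, is to exploit the full odd strength $2r+1$ rather than just $2r$. I would enlarge $\mathcal F$ by all sets of size $r+1$ that contain a fixed coordinate, say $1$. A set of size $\le r$ and a set of size $r+1$ differ in at most $2r+1$ places, while two sets of size $r+1$ both containing $1$ differ in at most $2r$ places (the common element drops out of the symmetric difference); in every case the symmetric difference has size between $1$ and $2r+1$, so orthonormality survives. This gives $N\ge\sum_{i=0}^r\binom{m}{i}+\binom{m-1}{r}$, and repeated use of $\binom{m}{i}=\binom{m-1}{i}+\binom{m-1}{i-1}$ collapses the right-hand side to the clean form $N\ge 2\sum_{i=0}^{r}\binom{m-1}{i}$.

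It then remains to verify the elementary inequality $\sum_{i=0}^{r}\binom{m-1}{i}\ge m^r/(2r-1)!!$. This holds because the sum is dominated by its top term $\binom{m-1}{r}\sim m^r/r!$ together with $r!\le(2r-1)!!$, with plenty of slack for $r\ge 2$; at $r=1$ the final bound $2\sum_{i=0}^{1}\binom{m-1}{i}=2m$ matches $2m/1!!$ exactly, which is reassuring since the naive degree-$\le r$ family would only have produced $m$ there. I expect the single genuine obstacle to be the orthonormality check of the enlarged family in the previous paragraph, namely confirming that anchoring the degree-$(r+1)$ characters at a common coordinate keeps all symmetric differences within $[1,2r+1]$ — this is exactly where the \emph{odd} order of the Chowla hypothesis is used — whereas the closing binomial estimate is purely routine.
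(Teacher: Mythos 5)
Your argument is correct in its main mechanism but takes a genuinely different route from the paper. The paper fixes a single sign pattern $\epsilon_1,\dots,\epsilon_m$ and bounds the measure of the corresponding atom from above by $\frac{(2r-1)!!}{2m^r}$, using the nonnegative test function $\bigl(\sum_i\epsilon_iF(T^ix)\bigr)^{2r}\bigl(m+\sum_i\epsilon_iF(T^ix)\bigr)$, expanding, and killing every term having an index of odd multiplicity via Equation~\ref{chowla}; summing over atoms then gives the count. You instead bound the support size of the law $p$ of $(F(Tx),\dots,F(T^mx))$ from below by exhibiting an orthonormal family of Walsh characters in $L^2(p)$ --- in effect you observe that the support is a weighted orthogonal array of strength $2r+1$ and reprove the Rao bound $N\ge 2\sum_{i=0}^r\binom{m-1}{i}$. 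Your orthogonality checks (two sets of size $\le r$; a set of size $\le r$ against one of size $r+1$; two sets of size $r+1$ sharing the anchor coordinate) are all right, as is the Pascal collapse to $2\sum_{i=0}^r\binom{m-1}{i}$. What your route buys is a stronger bound: for large $m$ your $2\sum_{i\le r}\binom{m-1}{i}\sim 2m^r/r!$ beats the paper's $2m^r/(2r-1)!!$ by a factor of $(2r-1)!!/r!$. What the paper's route buys is pointwise information --- an upper bound on the frequency of each individual pattern --- which a support-size argument does not give.

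The one genuine gap is the closing inequality $\sum_{i=0}^r\binom{m-1}{i}\ge m^r/(2r-1)!!$, which you dismiss as routine on the grounds that the sum is ``dominated by its top term $\binom{m-1}{r}\sim m^r/r!$.'' That heuristic fails when $m$ is comparable to $r$: for $m\le r$ the top term is zero, and even for $m$ a bounded multiple of $r$ one has $\binom{m-1}{r}$ much smaller than $m^r/r!$, so the lower-order terms are genuinely needed. The inequality is nevertheless true. One clean proof: expand
\[
m^r=\sum_{j=0}^r S(r+1,j+1)\,j!\,\binom{m-1}{j},
\]
which follows from $m^r=((m-1)+1)^r$, the Stirling expansion of powers into falling factorials, and the identity $\sum_i\binom{r}{i}S(i,j)=S(r+1,j+1)$; then check $S(r+1,j+1)\,j!\le(2r-1)!!$ for all $0\le j\le r$ by induction on $r$, using $S(r+1,j+1)\,j!=j\cdot S(r,j)\,(j-1)!+(j+1)\cdot S(r,j+1)\,j!\le(2j+1)(2r-3)!!\le(2r-1)!!$ for $j\le r-1$ and $S(r+1,r+1)\,r!=r!\le(2r-1)!!$ for $j=r$. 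With that step supplied, your proof is complete and in fact yields a slightly stronger conclusion than the proposition states.
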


\begin{proof} Let $\epsilon_1,\dots,\epsilon_m$ be a sign pattern of length $m$. It will suffice to show that the probability that $F(T^i x) = \epsilon_i$ for all $i$ from $1$ to $m$ is at most $\frac{ (2r-1)!!}{ 2 m^{r}}$, because then the total number of sign patterns with positive expectation must be at least $\frac{ 2 m^{r}}{(2r-1)!!} $ .

To do this, observe that $( \sum_{i=1}^m \epsilon_i F(T^i x) )^{2r}  ( m + \sum_{i=1}^m \epsilon_i F( T^i x)) \geq 0$, and equals $2m^{2 r+1}$ if $F(T^i x) =\epsilon_i$ for all $i$, so this probability is at most \[ \frac{1}{ 2m^{2 r+1}} \mathbb E_{x \in X} ( \sum_{i=1}^m \epsilon_i F(T^i x) )^{2r}  ( m + \sum_{i=1}^m \epsilon_i F( T^i x)) 
\] \[ =  \frac{1}{ 2m^{2 r+1}}  \left( m \sum_{ 1 \leq i_1,\dots,  i_{ 2 r}\leq m} \prod_{j=1}^{ 2 \lceil \frac{d}{2}} \epsilon_{i_j} F(T^{i_j} x) + \sum_{ 1 \leq i_1,\dots,  i_{ 2 r+1}\leq m} \prod_{j=1}^{ 2 \lceil \frac{d}{2}+1} \epsilon_{i_j} F(T^{i_j} x) \right).\]

By assumption, all these expectations vanish unless each $i_j$ appears with even multiplicity, which can only happen in the first term, and in this case the expectation is one. The number of tuples with even multiplicity is at most the number of ways of dividing  $1,\dots,2 r$ into sets of size two times $m^{r}$, which is $(2 r -1)!! m^{ r}$, and the total contribution of these tuples is at most $(2 r -1)!! m^{ r+1}$, so we get the stated bound. \end{proof}

Let $\lambda$ be a $\pm1$-valued function multiplicative function that does not weakly pretend to be a Dirichlet character in the sense of \cite[p. 3]{ttodd} (e.g. the Liouville function). Let $(X,\mu,F, T, M,I_n)$ be a dynamical model for $\lambda$ that is $d$-Fourier uniform. As mentioned earlier, it should be possible to check, by a variant of \cite[Section 3]{tchowlasarnak}, that for all $0< k \leq d+1$ and $c_1,\dots,c_k \in \mathbb N$,   This is the dynamical analogue of the Chowla conjecture - specifically, the version using only shifts and not the more general version with linear forms. The version with shifts should apply equally well but is notationally more complicated and is not necessary for this section. By an argument along the lines of \cite{ttodd}, it is possible to show Equation \ref{chowla} for all $k$ odd. Combining these, we see that Equation \ref{chowla} holds for all $k \leq 2 \lceil \frac{d}{2} \rceil +1$. So we can apply Propositions~\ref{back2} and~\ref{back1} with $r= \lceil \frac{d}{2} \rceil$.  However, checking this formally would require going over those arguments, which are more difficult than anything in this paper, line-by-line, so we avoid doing that here. 

Similarly, in the case $d=1$, we believe that the lower bound of Proposition~\ref{back1} can be improved for dynamical models of a strongly aperiodic multiplicative function $\lambda$ by following \cite[with ``strongly aperiodic" defined in Definition 2.9]{fhsystem}. Theorem 1.2 of that paper shows that the number of sign patterns is superlinear for the multiplicative function itself. To adapt it to dynamical models, one mainly has to adapt the arguments of \cite[Section 3]{fhsystem}, which are based on \cite{ttodd}. Using \cite[Theorem 3.8]{fhsystem}, one can show that a dynamical model for $\lambda$ is a factor of the system of arithmetic progressions with prime steps of itself in the sense of \cite[Definition 4.1]{fhsystem}. The remaining arguments are dynamical in nature.

Note that the lower bound on Proposition \ref{back1} on the number of sign patterns is $m$ raised to something linear in $d$, while the upper bound (in one case) of Theorem \ref{main1} is $m$ raised to something quadratic in $d$. Similarly, Proposition \ref{back2} guarantees the existence of sign patterns up to a length that is linear in $d$, while Proposition \ref{back1} shows nonexistence (in one case) of any sign pattern of length $d^2 \log d$. So in each case the gap between results and counterexamples is roughly quadratic. Perhaps arguments using sophisticated tools of dynamical systems theory can improve on the easy arguments of this section and close that gap.

\bibliographystyle{plain}
\bibliography{references}

\end{document}